\newtheorem{thm}{Theorem}
\newtheorem{conj}{Conjecture}
\newtheorem{example}{Example}
\newtheorem{lem}{Lemma}
\newtheorem{defi}{Definition}
\newtheorem{question}{Question}
\theoremstyle{remark}
\newtheorem*{remark}{Remark}
\providecommand{\keywords}[1]{\textbf{\textit{Keywords ---}} #1}
\begin{document} 

\title{On the 1-switch conjecture in the Hypercube and other graphs}

\author{Daniel Soltész\\ Department of Computer Science and Information Theory, \\ Budapest University of Technogoly and Economics}

\maketitle

\begin{abstract}

Feder and Subi conjectured that for any $2$-coloring of the edges of the $n$-dimensional cube, we can find an antipodal pair of vertices connected by a path that changes color at most once. We discuss the case of random colorings, and we prove the conjecture for a wide class of colorings. Our method can be applied to a more general problem, where $Q_n$ can be replaced by any graph $G$, the notion of antipodality by a fixed automorphism $\phi \in Aut(G)$. Thus for any $2$-coloring of $E(G)$ we are looking for a pair of vertices $u,v$ such that $u= \phi(v)$ and there is a path between them with as few color changes as possible. We solve this problem for the toroidal grid $G=C_{2a} \square c_{2b}$ with the automorphism that takes every vertex to its unique farthest pair. Our results point towards a more general conjecture which turns out to be supported by a previous theorem of Feder and Subi.


\end{abstract}

\keywords{hypercube, n-cube, edge coloring, labelling, antipodal, automorphism, discrete torus }

\section{Introduction}

The graph of the $n$-dimensional cube $Q_n$, has vertex set $\{0,1\}^n$, two vertices are adjacent if and only if they differ at precisely one coordinate. We say that an edge faces the $i$-th direction if the endpoints of the edge differ in the $i$-th coordinate. Two vertices are antipodal if they differ in every coordinate. Two edges are antipodal if the endpoints of one are the antipodal pairs of the endpoints of the other. A $k$-coloring of the edges of $Q_n$ is a function from $E(Q_n)$ to $\{1,\ldots, k\}$. In this paper we will investigate $2$-colorings, so we will refer to the two colors as {\em red} and {\em blue}. We say that a $2$-coloring is antipodal if antipodal edges receive different colors. We begin with the chronologically first conjecture due to Norine. 

\begin{conj}[S. Norine \cite{norine}] \label{conj:norine}
If $E(Q_n)$ is $2$-colored antipodally, then there is a pair of antipodal vertices connected by a monochromatic path. 
\end{conj}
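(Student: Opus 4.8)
I would begin by exploiting the \emph{antipodal map} $\alpha\colon V(Q_n)\to V(Q_n)$, $v\mapsto\bar v$, a fixed-point-free involution in $\mathrm{Aut}(Q_n)$: since antipodal edges receive opposite colours, $\alpha$ carries every red edge to a blue one and every blue edge to a red one, so it is an isomorphism from the red subgraph $G_R$ onto the blue subgraph $G_B$, and $G_R,G_B$ are edge-disjoint with $G_R\cup G_B=Q_n$. In this language Conjecture~\ref{conj:norine} asks for a connected component of $G_R$ or of $G_B$ that contains an antipodal pair of vertices. One reduction is free: the $2^{n-1}$ antipodal pairs partition $V(Q_n)$, so \emph{any} set of more than $2^{n-1}$ vertices contains an antipodal pair, and the conjecture is therefore immediate whenever $G_R$ or $G_B$ has a component on more than $2^{n-1}$ vertices. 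In particular it holds with high probability for a random antipodal colouring, whose red subgraph has $n2^{n-2}$ edges on $2^{n}$ vertices and so will contain such a large component. The remaining case --- the one I would attack by contradiction --- is that no component of $G_R$ or of $G_B$ contains an antipodal pair; note that this already forces every component to have at most $2^{n-1}$ vertices.

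This assumption forces a rigid structure, and extracting it is the first step. Let $A$ be the red component of $0^n$. Being a maximal red-connected set, every edge with exactly one endpoint in $A$ is blue; its image $\alpha(A)$ is a full blue component, the one containing $1^n$, so every edge leaving $\alpha(A)$ is red. The hypothesis forces $A\cap\alpha(A)=\varnothing$ --- a common vertex $w$ would place the antipodal pair $\{w,\bar w\}$ inside $A$ --- and also forbids any edge between $A$ and $\alpha(A)$, since such an edge would have to be blue (it leaves $A$) and red (it leaves $\alpha(A)$) at once. Running this for every component, $\alpha$ matches the partition of $V(Q_n)$ into red components with its partition into blue components so that no red component meets, or is joined by an edge to, its own $\alpha$-image, while the bipartite ``incidence graph'' recording which red and blue components share a vertex is connected because $Q_n$ is. The goal is then to show that $Q_n$ is too highly connected to carry such a configuration.

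Turning that into a contradiction is the real content, and I expect it to be the main obstacle; indeed I would not be surprised if the statement is open in full generality. Harper's edge-isoperimetric inequality for the cube already bites: the all-blue boundary of a red component $A_i$ has size at least $|A_i|\bigl(n-\log_2|A_i|\bigr)$, and comparing $\sum_i|A_i|=2^{n}$ with the exact count $|E(G_R)|=|E(G_B)|=n2^{n-2}$ forced by antipodality rules out every configuration in which the components are small (all singletons, and more generally all of size below about $2^{n/2}$), leaving only a bounded number of large components --- where I would try to use the bipartiteness of $Q_n$ and the symmetry of the incidence graph to finish. A parallel line is induction on the dimension; naive induction fails because restricting an antipodal colouring to a facet $Q_{n-1}$ need not be antipodal ($\alpha$ swaps the two facets rather than fixing either), but splitting off the last coordinate via $Q_n\cong Q_{n-1}\,\square\,K_2$ displays the colouring as an \emph{arbitrary} $2$-colouring $c_0$ of one facet together with antipodally-constrained colours on the direction-$n$ matching, the other facet being forced to carry the colouring obtained from $c_0$ by swapping the two colours and composing with the antipodal map of $Q_{n-1}$; a monochromatic antipodal path crossing the matching exactly once then becomes a local compatibility between a same-coloured $c_0$-path and an opposite-coloured one, and making those quantifiers cooperate --- together with handling paths that cross the matching three or more times --- is where a full proof would have to do the genuine work. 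Short of that I would expect only partial results: colourings with one colour class connected, sparse, or random, and low-dimensional cases checked by hand.
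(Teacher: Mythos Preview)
The statement you are attempting to prove is labelled a \emph{conjecture} in the paper, and the paper does not prove it: Norine's conjecture is presented as open, and indeed remains open. So there is no ``paper's own proof'' to compare against. You anticipate this yourself (``I would not be surprised if the statement is open in full generality''), and your write-up is candidly a survey of possible angles rather than a proof; the honest assessment is that the central step --- deriving a contradiction from the assumption that no monochromatic component contains an antipodal pair --- is exactly the missing idea, and neither the isoperimetric bound nor the $Q_{n-1}\,\square\,K_2$ induction you sketch closes it.

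For context, what the paper \emph{does} prove is a partial result toward the stronger Feder--Subi conjecture (Conjecture~\ref{conj:federsubi}): it introduces the component graph $Comp(G,c)$ and shows (Theorem~\ref{thm:main}) that if $Comp(Q_n,c)$ has no long cycles --- in particular, if it is a tree --- then there is an antipodal pair joined by a path with few colour changes (zero, in the tree case). This recovers and generalises Feder and Subi's theorem for simple colourings. Your large-component observation overlaps with the paper's discussion of random colourings (where, via Bollob\'as, the component graph is typically a tree), but the structural route through $Comp(G,c)$ is different from both your isoperimetric and inductive lines, and none of the three is known to settle the full conjecture.
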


While proving Conjecture $1$ for a large class of colorings, Feder and Subi formulated the $1$-switch version that does not require the coloring to be antipodal. 

\begin{conj}[T. Feder, C. Subi \cite{antipod}] \label{conj:federsubi} 
 For every $2$-coloring of $E(Q_n)$ there is a pair of antipodal vertices joined by a path that changes its color at most once.

\end{conj}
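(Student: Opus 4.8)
\medskip
\noindent\emph{Proof strategy.} The plan is to induct on $n$ via the standard decomposition $Q_n=Q^{0}\cup Q^{1}\cup M$, where $Q^{0},Q^{1}$ are the facets with last coordinate $0$, respectively $1$ (each a copy of $Q_{n-1}$) and $M$ is the matching of direction-$n$ edges between them. The cases $n\le 2$ are immediate: an antipodal path can be taken of length at most $2$, which changes color at most once. If one of the color subgraphs $G_R,G_B$ is connected then \emph{every} antipodal pair is joined by a monochromatic path and we are done; so assume both are disconnected, which produces a monochromatic edge cut of each color. The point of the decomposition is the following: for $a\in Q^{0}$, writing $\bar a^{0}$ for the antipode of $a$ \emph{inside} $Q^{0}$ (flip coordinates $1,\dots,n-1$), the $M$-edge at $\bar a^{0}$ reaches exactly $\bar a$, the true $Q_n$-antipode of $a$; symmetrically on the $Q^{1}$ side, and symmetrically one may instead prepend the $M$-edge at $a$ and then run from $a\oplus e_n$ to $\bar a$ inside $Q^{1}$ (two vertices antipodal within $Q^{1}$). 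Hence a $1$-switch antipodal path in a facet, supplied by induction, becomes a candidate $1$-switch antipodal path in $Q_n$ after attaching a single matching edge at one of its ends.

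To carry out the attachment it is convenient to split along a direction all of whose $2^{n-1}$ edges share a color, so that $M$ is monochromatic, say red; this is available for a wide class of colorings (those with a monochromatic, or nearly monochromatic, direction), but not in general. An inductive $1$-switch path in a facet, read red-then-blue, ends in blue, so gluing on a red $M$-edge creates a second switch --- one too many. The repair exploits the remaining freedom: there are up to four candidates (the two facets, times the two ends of the path), and, allowing also the global color swap, one tries to force the relevant endpoint edge to be red for at least one of them. Phrased in terms of monochromatic components, what one ultimately wants is a red component $A$ and a blue component $B'$ with $A\cap B'\neq\emptyset$ and $A\cap\overline{B'}\neq\emptyset$, where $\overline{B'}$ is the antipodal image of $B'$; the structural hypotheses defining the ``wide class'' are chosen precisely to guarantee such a pair.

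The step I expect to be the real obstacle is this control of endpoint colors. Induction hands us \emph{some} $1$-switch antipodal path in a facet but says nothing about the colors of its first and last edges, and the natural strengthening ``there is a $1$-switch antipodal path beginning with a prescribed color'' appears to be false for general colorings, so the induction does not bootstrap. Moreover a worst-case coloring can make every separating cut rainbow and thereby decouple the two facets, which suggests that the full conjecture resists any argument that merely recurses on the cube decomposition; a genuinely global idea --- a flow or parity argument using the free antipodal involution directly --- may be needed. For random colorings I would argue instead by moments: the expected number of antipodal pairs joined by a monochromatic geodesic of length $n$ is $2^{n-1}\cdot n!\cdot 2^{1-n}=n!$, which is huge, and a second-moment estimate (the work being to bound correlations between geodesics sharing edges) should give existence with high probability, so the conjecture holds for almost every coloring. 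The same attaching argument, now unconditional because the structure is rigid, is what I would use to handle the torus $C_{2a}\,\square\,C_{2b}$ with its unique-farthest-vertex automorphism.
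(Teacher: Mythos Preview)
This statement is an open conjecture; the paper does not prove it (and remarks that even the $o(n)$ question is open), so there is no proof in the paper to compare against.

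Your proposal has a genuine gap, which you in fact identify yourself: the facet recursion $Q_n=Q^{0}\cup Q^{1}\cup M$ produces a $1$-switch antipodal path inside a facet, but attaching the matching edge can create a second switch, and you have no mechanism to control the endpoint color. The listed repairs do not work in general --- a monochromatic direction need not exist, and nothing prevents all candidate attachments (two facets, two ends, and the global color swap) from failing simultaneously. Your own diagnosis that the induction ``does not bootstrap'' is correct, and this is precisely why the conjecture remains open.

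Where the paper does make progress, the method is quite different from yours. It passes to the component graph $Comp(G,c)$ and proves (Theorem~\ref{thm:main}), via a Menger-type chase, that if $Comp(G,c)$ has no cycle of length at least $2k+3$ then some $u$ is joined to $\phi(u)$ by a path with at most $k$ switches; this holds for any graph and any automorphism, not just $Q_n$ with antipodality. For random colorings the paper does not run a second-moment count of monochromatic geodesics as you propose; it cites Bollob\'as's structure theorem for $Q_{n,1/2}$ to conclude that $Comp(Q_n,c)$ is typically a tree, whereupon Theorem~\ref{thm:main} with $k=0$ applies directly. The torus case (Theorem~\ref{thm:torus}) is handled by averaging color changes over ``lazy diagonals'' with a per-$4$-cycle count, not by any attachment recursion.
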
 

Note that even the question whether $o(n)$ color changes suffice is still open \cite{leaderi}. 
The smallest example of a coloring where we need a color change is a properly edge-colored $Q_2$ (a $4$-cycle). Feder and Subi call colorings of $E(Q_n)$ without properly colored $4$-cycles \textit{simple} colorings. They proved Conjecture \ref{conj:federsubi} in the case of simple colorings, without a color change.

\begin{thm}[T. Feder, C. Subi \cite{antipod}] \label{simple} 
For every simple $2$-coloring of $E(Q_n)$ there is a pair of antipodal vertices connected by a monochromatic path. 
\end{thm}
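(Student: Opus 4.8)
The plan is to argue by induction on $n$, the cases $n\le 2$ being immediate: for $n=2$ a simple coloring is a $2$-coloring of a $4$-cycle that is not properly colored, so by parity it has $0$ or $2$ color changes around the cycle, and in either case there are two consecutive edges of the same color, whose endpoints are an antipodal pair of $Q_2$. For the inductive step, fix a simple $2$-coloring of $Q_n$ with $n\ge 3$. Splitting $Q_n$ along some direction $i$ writes it as two copies $Q^0,Q^1$ of $Q_{n-1}$, joined by the perfect matching of direction-$i$ edges; these matching edges are recorded by a map $m\colon V(Q_{n-1})\to\{\text{red},\text{blue}\}$, and I write $M_c=m^{-1}(c)$. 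Since a $2$-face of $Q^0$ or $Q^1$ is a $2$-face of $Q_n$, the induced colorings of the two halves are again simple, so the induction hypothesis applies to each.

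The first tool is a lifting lemma: \emph{if every edge of some direction $i$ has the same color $c$, then the $c$-colored subgraph of $Q_n$ is spanning and connected}, hence contains a monochromatic path between any antipodal pair. To see this, split along $i$; for every $2$-face through direction $i$ its two direction-$i$ edges are both $c$-colored, so by simplicity at least one of its other two edges is $c$-colored. Thus over each edge of the base copy $Q_{n-1}$ at least one of its two lifts is $c$-colored, and since every direction-$i$ edge is $c$-colored one can lift any path of $Q_{n-1}$ to a $c$-colored path of $Q_n$, hopping between the two copies along direction-$i$ edges when forced to, and ending in the desired copy by one final direction-$i$ edge. In particular, if $m$ is constant we are done; so we may assume \emph{no direction of $Q_n$ is monochromatic}.

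The second tool is the same idea applied to direction $i$ and to a color $c$ \emph{on the set $M_c$}: a $2$-face through direction $i$ whose two direction-$i$ (matching) edges are both colored $c$ must, by simplicity, have one of its two horizontal edges colored $c$. Hence over each edge of the induced subgraph $Q_{n-1}[M_c]$ at least one lift is $c$-colored, and running the lifting argument inside one connected component $C$ of $Q_{n-1}[M_c]$ (the $c$-colored matching edges over $C$ let us switch copies freely) shows that the $c$-colored subgraph of $Q_n$ connects all of $C\times\{0,1\}$. Consequently, \emph{if some $M_c$ contains an antipodal pair of $Q_{n-1}$ that lies in a single component of $Q_{n-1}[M_c]$, we are done}. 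On the other hand, the induction hypothesis gives, in each of $Q^0,Q^1$, an antipodal pair $\{a',\bar a'\}$ of $Q_{n-1}$ joined there by a monochromatic path of some color $X$; if one of $a',\bar a'$ lies in $M_X$, then splicing that half-path with the matching edge at that vertex produces a monochromatic path between a genuine antipodal pair of $Q_n$ (namely $\{(a',0),(\bar a',1)\}$ or $\{(\bar a',0),(a',1)\}$), and again we are done. This already settles, for instance, every case in which $m$ is itself an antipodal coloring of $V(Q_{n-1})$, since then $a'$ and $\bar a'$ always lie in opposite classes $M_X,M_{\bar X}$.

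The remaining, and genuinely hard, case is when all the monochromatic antipodal pairs produced by the two halves are "out of phase" with $m$ — for each such pair of color $X$ both endpoints lie in $M_{\bar X}$ — and, by the second tool, these two endpoints sit in distinct components of $Q_{n-1}[M_{\bar X}]$. I expect the bulk of the argument to be here. The plan is to strengthen what the induction carries (tracking not just the existence but enough of the structure of the monochromatic antipodal pairs in each half, together with the component structure of $Q_{n-1}[M_r]$ and $Q_{n-1}[M_b]$, which partition $V(Q_{n-1})$), and to exploit the freedom of choosing the splitting direction among all $n$ of them: the "out of phase" configuration is a strong coincidence, and the goal is to show it cannot persist simultaneously for every choice of direction in a simple coloring. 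This is the step I anticipate being the main obstacle.
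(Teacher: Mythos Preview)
Your proposal is incomplete. The lifting lemma and the ``second tool'' are fine, and they do dispose of the cases where the matching coloring $m$ cooperates; but you yourself flag the residual ``out of phase'' case as the main obstacle and then stop. The plan to strengthen the inductive hypothesis and vary the splitting direction is only a programme, not an argument: you give no mechanism that would prevent the bad configuration from persisting simultaneously for every choice of direction, and this is exactly where a direct induction on $n$ tends to stall. As written, the proof has a genuine gap at its hardest step.

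The paper avoids this obstacle by taking a completely different route, with no induction on $n$ at all. It works through the \emph{component graph} $Comp(Q_n,c)$, whose vertices are the monochromatic components and whose edges join components that share a vertex of $Q_n$. The relevant structural fact (attributed to Feder and Subi) is that a \emph{simple} coloring forces $Comp(Q_n,c)$ to be a tree. Theorem~\ref{simple} then falls out as the $k=0$ case of the paper's Main Theorem: for any graph $G$, any $\phi\in \mathrm{Aut}(G)$, and any $2$-coloring $c$, if $Comp(G,c)$ is acyclic then some vertex $u$ is joined to $\phi(u)$ by a monochromatic path. The proof is a chasing argument inside the component graph --- for each component $a$ one considers the set $S(a)$ of components meeting $\phi(a)$, and one iteratively replaces $a$ by a neighbour so that the connected piece of the tree still available to $S(a)$ strictly shrinks until $a$ meets $S(a)$. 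This global, automorphism-agnostic approach replaces your direction-by-direction splitting and is precisely what dissolves the hard case you were left with.
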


Thus in some sense, the properly colored $4$-cycles are the reason of the color change. Leader and Long conjectured the $n$-long-path version of the previous conjectures. While Conjecture \ref{conj:federsubi} is stronger than Conjecture \ref{conj:norine} (see \cite{antipod}), if we require shortest ($n$-long) paths, they become equivalent (see \cite{leaderi}). 

\begin{conj}[I. Leader, E. Long] \cite{leaderi} \label{conj:leader} The following equivalent statements hold: 
\begin{itemize} 
\item In every $2$-coloring of $E(Q_n)$, there are antipodal vertices connected by a path of length $n$ which changes its color at most once. 
\item In every antipodal $2$-coloring of $E(Q_n)$ there are antipodal vertices connected by a monochromatic path of length $n$.
\end{itemize}
\end{conj}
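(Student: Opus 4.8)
The two displayed statements are conjectural, so what can actually be established here is the equivalence that the statement asserts, and that is what I would prove. Write (A) for ``every $2$-coloring of $E(Q_n)$ admits antipodal vertices joined by a length-$n$ path with at most one colour change'' and (B) for ``every antipodal $2$-coloring of $E(Q_n)$ admits antipodal vertices joined by a monochromatic length-$n$ path'', both quantified over all $n$. The one structural fact I would invoke repeatedly is that a length-$n$ path between antipodal vertices of $Q_n$ is a geodesic: it flips each of the $n$ coordinate directions exactly once, so it is encoded by an ordering of $\{1,\dots,n\}$, and conversely any walk flipping pairwise distinct coordinates is automatically a simple path. I would also use that antipodal edges face the same direction, so the antipodal image of a path is a path of the same length flipping the same directions in the same order, whose edge colours are reversed when the colouring is antipodal.

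\emph{Direction (A)$\Rightarrow$(B).} Given an antipodal colouring, I would apply (A) to obtain antipodal vertices $u,\bar u$ joined by a length-$n$ path $P$ with at most one switch. If $P$ is monochromatic we are done, so suppose it switches at a vertex $w$, splitting into a red geodesic $u\to w$ flipping a coordinate set $S$ and a blue geodesic $w\to\bar u$ flipping the complementary set. By antipodality of the colouring the antipodal image of the red half is a blue geodesic $\bar u\to\bar w$ flipping $S$; concatenating the blue $w\to\bar u$ with this blue $\bar u\to\bar w$ flips every coordinate exactly once, hence is a simple monochromatic length-$n$ path between the antipodal pair $w,\bar w$. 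That is (B).

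\emph{Direction (B)$\Rightarrow$(A).} This is where the real idea lies: given an arbitrary $2$-colouring $c$ of $E(Q_n)$, I would lift it to an antipodal colouring $c'$ of $Q_{n+1}=Q_n\,\square\,K_2$. On the subcube $Q_n\times\{0\}$ I use $c$; on $Q_n\times\{1\}$ I use the colour-reversed antipodal image of $c$, which is precisely what antipodality of $c'$ forces on those edges; and on the matching edge $\{(x,0),(x,1)\}$ I set the colour to be red iff $x_1=0$, so that this edge and its antipode $\{(\bar x,0),(\bar x,1)\}$ receive opposite colours for every parity of $n$. After checking that $c'$ is antipodal, I would apply (B) in $Q_{n+1}$ to get a monochromatic, say red, length-$(n+1)$ geodesic between antipodal vertices; it uses the matching direction exactly once, so it splits as a red $c$-geodesic $x\to y$ in the bottom copy flipping a set $S$, one matching edge, and a top-copy geodesic whose projection-then-antipode is, by the definition of $c'$, a blue $c$-geodesic $x\to\bar y$ flipping the complementary set. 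Concatenating the reverse of the blue piece with the red piece yields a simple length-$n$ path between the antipodal pair $y,\bar y$ with exactly one colour change, i.e.\ (A).

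The direction (A)$\Rightarrow$(B) I expect to be routine once one thinks of reflecting one half of the path. The step that needs care is the construction in (B)$\Rightarrow$(A): one must (i) colour the matching edges so that $c'$ is honestly antipodal regardless of the parity of $n$ (hence using the first coordinate rather than the Hamming weight), and (ii) verify that the path produced by (B) splits into two pieces flipping complementary coordinate sets, so that the concatenations in both directions are genuine simple geodesics rather than self-intersecting or closed walks. Both points are dispatched by the ``geodesic $=$ ordering of coordinates'' bookkeeping fixed at the outset.
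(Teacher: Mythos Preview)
The paper does not actually contain a proof of this equivalence: it is stated as a conjecture of Leader and Long, with the equivalence attributed to \cite{leaderi}. So there is no ``paper's own proof'' to compare against; your task reduces to whether the argument stands on its own.

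It does. For (A)$\Rightarrow$(B), reflecting the red half of a one-switch geodesic through the antipodal map is exactly the right move, and your coordinate-set bookkeeping guarantees the concatenated blue walk $w\to\bar u\to\bar w$ is a simple geodesic. For (B)$\Rightarrow$(A), the doubling construction $Q_{n+1}=Q_n\,\square\,K_2$ with $c$ on the bottom, its colour-reversed antipodal image on top, and $x_1$-dependent matching colours is a clean way to manufacture an antipodal colouring; your remark that using $x_1$ (rather than Hamming parity) is what makes the matching edges honestly antipodal for even $n$ is correct and worth keeping. The extraction step is also sound: once the monochromatic $(n{+}1)$-geodesic is split at the unique matching edge into a bottom piece flipping $S$ and a top piece flipping $S^c$, translating the top piece back through the definition of $c'$ gives a blue $c$-geodesic on the complementary coordinate set, and the concatenation $\bar y\to x\to y$ is simple precisely because $S$ and $S^c$ are disjoint. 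The edge cases $S=\emptyset$ and $S=\{1,\dots,n\}$ yield monochromatic paths, consistent with ``at most one'' switch.

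One small point of presentation: in (B)$\Rightarrow$(A) you silently assume the bottom endpoint of the $(n{+}1)$-geodesic is the starting point. Since the two endpoints lie in different copies this is just a relabelling, but it would read more cleanly to say so explicitly.
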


To state our main result we define the component graph. While we feel that the definition of the component graph is natural in the first place, we present more motivating thoughts in the next section where we discuss random colorings.

\begin{defi} \label{comp} Let $G$ be a graph, and $c$ a $2$-coloring of $E(G)$. The pair $(G,c)$ determines a graph we call the component graph $Comp(G,c)$. The vertices of $Comp(G,c)$, are the monochromatic components of $G$ with coloring $c$. Two vertices are connected in the component graph if the corresponding monochromatic components have at least one common vertex in $G$. 
\end{defi}

$Comp(G,c)$ is always simple and bipartite (as we are interested in $2$-colorings). It is connected if and only if $G$ is connected. The most important property of the component graph is that for any path in $G$, there is a corresponding walk in $Comp(G,c)$. The length of the walk is exactly the number of color changes of the path.  

\begin{thm}[Main theorem] \label{thm:main} Let $G$ be a graph, $\phi \in Aut(G)$ an automorphism, $c$ a $2$-coloring of $E(G)$ and $0 \leq k$ an integer. If the length of the longest cycle in $Comp(G,c)$ is less than  $2k+3$, then there is a vertex $u \in V(G)$ such that there is a path connecting $u$ and $\phi(u)$ that changes colors at most $k$ times. 
\end{thm}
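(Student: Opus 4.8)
The plan is to work entirely inside $Comp(G,c)$, using the correspondence recorded just above the statement. For a vertex $u$, write $d(u)$ for the least number of color changes on a $u$--$\phi(u)$ path. A minimizing such path projects to a walk of length $d(u)$ between a monochromatic component $A_0\ni u$ and a monochromatic component $B_0\ni\phi(u)$, and conversely any walk of length $j$ in $Comp(G,c)$ between a component through $u$ and one through $\phi(u)$ lifts — after deleting repeated vertices, which never creates a color change — to a $u$--$\phi(u)$ path with at most $j$ color changes; hence
\[
 d(u)=\min\{\operatorname{dist}_{Comp}(A,B)\;:\;A\ni u,\ B\ni\phi(u)\ \text{monochromatic components}\},
\]
and the projection of a minimizing path is a \emph{geodesic} of $Comp(G,c)$. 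The theorem asserts $\min_u d(u)\le k$, and I would prove the contrapositive: if $d(u)\ge k+1$ for every $u$, then $Comp(G,c)$ contains a cycle of length at least $2k+3$. First dispose of trivialities: if $\phi=\mathrm{id}$ or $\phi$ fixes a vertex the empty path works; if some monochromatic component $A$ has $\phi^{-1}(A)\cap A\ne\emptyset$ then $d(u)\le 1$ for $u\in\phi^{-1}(A)\cap A$; and since $Comp(G,c)$ is bipartite every cycle is even, so it is enough to produce an even cycle of length at least $2k+4$.

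Next I would extract the geometry the hypothesis imposes. Since every cycle of $Comp(G,c)$ has length at most $2k+2$, every block (maximal $2$-connected piece) has at most $2k+2$ vertices — otherwise, rerouting a longest cycle inside a block through a vertex outside it via a Menger pair yields a longer cycle — so $Comp(G,c)$ is tree-like above scale $k+1$; in particular, any two geodesics between vertices at distance $D$ coincide except on pairwise disjoint detours of span at most $k+1$, because two geodesics that separate over a span of $t$ edges bound a cycle of length at least $2t$, forcing $t\le k+1$. Now take a vertex $v$ with $m:=d(v)$ minimal, so $m\ge k+1$ and $m\le d(u)$ for all $u$, and fix a minimizing $v$--$\phi(v)$ path $P$ with projection the geodesic $\gamma=A_0A_1\cdots A_m$ (so $v\in A_0$, $\phi(v)\in A_m$, and for the color-switch vertices $w_1,\dots,w_m$ of $P$ the two monochromatic components meeting at $w_t$ form the edge $\{A_{t-1},A_t\}$ of $\gamma$). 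For a switch vertex $w_t$ near the middle of $\gamma$, minimality gives $d(w_t)\ge m$, so both monochromatic components of $\phi(w_t)$ lie at distance at least $m$ from $\{A_{t-1},A_t\}$; since every vertex of $\gamma$ is within $\lceil m/2\rceil<m$ of its middle, those components lie off $\gamma$ entirely. Using in addition that $v$ and $w_1$ lie in the common component $A_0$ (and $w_m,\phi(v)$ in $A_m$), so that $\phi$ carries a monochromatic $v$--$w_1$ path to a $\phi(v)$--$\phi(w_1)$ path and hence to a $Comp(G,c)$-walk, and combining this with a geodesic realizing $d(w_t)$, one assembles a closed walk in $Comp(G,c)$ which, after reduction and by the rigidity above together with bipartiteness, must contain an even cycle of length at least $2m+2\ge 2k+4$ — the desired contradiction.

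The main obstacle is precisely that last assembly. A priori the auxiliary routes built from $\gamma$ and from the $\phi$-images could retrace $\gamma$ and cancel, so that the closed walk collapses under reduction to a short cycle, or to nothing, rather than yielding a long one; the crux is to show the two routes genuinely diverge over a span of at least $k+2$ edges. This is where minimality of $m$ has to be used sharply, since any shortcut over a larger span would exhibit a vertex $u'$ with $d(u')<m$, contradicting the choice of $v$; it must be combined with the geodesic rigidity coming from the bound on the longest cycle, and the $\pm1$'s have to be tracked carefully — using that $Comp(G,c)$ is bipartite — so that one actually reaches $2k+3$ and not merely $2k+2$.
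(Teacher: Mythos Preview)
Your proposal is not a proof: you yourself flag the central step --- the ``assembly'' of a long cycle from $\gamma$ and the $\phi$-images of the switch vertices --- as an unresolved obstacle, and nothing earlier in the write-up supplies the missing argument. Without that step the contrapositive is simply not established. There is also a concrete error earlier: the claim that a bound of $2k+2$ on the longest cycle forces every block of $Comp(G,c)$ to have at most $2k+2$ vertices is false. The complete bipartite graph $K_{2,n}$ is $2$-connected and bipartite, every cycle has length $4$, yet it has $n+2$ vertices; your ``rerouting through a Menger pair'' does not lengthen the cycle here. So the picture of $Comp(G,c)$ as tree-like above scale $k+1$, on which the rest of your sketch leans, is not justified.

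The paper avoids all of this by arguing directly rather than by contradiction, and it never tries to exhibit a long cycle. For each component $a$ it considers the set $S(a)$ of components meeting $\phi(a)$; the goal becomes finding some $a$ with $S(a)\cap B_k(a)\neq\emptyset$. The key lemma is that $S(a)$ is connected and that $S(a)\cap S(b)\neq\emptyset$ whenever $a,b$ are adjacent. One then runs a chase: if $S(a_i)$ misses $B_k(a_i)$, let $X_i$ be the component of $Comp(G,c)\setminus B_k(a_i)$ containing $S(a_i)$, and look at the set $H_i$ of vertices of $X_i$ at distance exactly $k+1$ from $a_i$. Two internally disjoint $a_i$--$H_i$ paths, together with a path in $X_i$ joining their endpoints, would give a cycle of length at least $2k+4$; hence by Menger there is a cut vertex $s_i$ separating $a_i$ from $H_i$. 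Stepping $a_{i+1}$ one edge from $a_i$ towards $s_i$ swallows all of $H_i$ into $B_k(a_{i+1})$, and the lemma forces $S(a_{i+1})\subseteq X_i\setminus H_i$, so $X_{i+1}\subsetneq X_i$. This cannot continue forever. The cycle-length hypothesis is used only once, locally, to produce the cut vertex --- a much lighter use than the global geodesic rigidity you are trying to extract.
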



The relevance of our theorem in the view of Conjecture \ref{conj:federsubi} can be seen by choosing $G=Q_n$ and $\phi$ the automorphism that takes every vertex to its antipodal pair. The case of $k=0$ is of particular interest since then the assumption becomes that $Comp(Q_n,c)$ has to be a tree. In \cite{antipod} Feder and Subi proved that if the coloring is simple, the component graph is a tree, thus Theorem \ref{thm:main} is a generalization of Theorem \ref{simple}. It is a simple corollary of a theorem of Bollobás that the ratio of the number of colorings where $Comp(Q_n,c)$ is a tree to the number of all colorings of $E(Q_n)$ tends to one as $n$ tends to infinity. 

The property, that $Comp(Q_n,c)$ is a tree is (in an equivalent form) used by Feder and Subi to prove Theorem \ref{simple}. Their method is tailored to $Q_n$ and does not seem to work without the additional assumption of the simplicity of the coloring. Our results naturally point towards the generalization that we can use any automorphism of $Q_n$.


We explore the analogous problem for other graphs and automorphisms as well. If $G$ is a graph and $\phi \in Aut(G)$, let $d(G,\phi)$ denote the smallest integer such that no matter how we $2$-color $E(G)$ we will always find a pair of vertices $(u,v)$ such that $\phi(u)=v$ and there is a path connecting $u$ and $v$ that changes colors at most $d(G,\phi)$ times. Let us denote the maximum of $d(G,\phi)$ over all automorphisms of $G$ by $D(G)$. We conjecture the following generalization of Conjecture $2$. 

\begin{conj} \label{conj:weak} 
Let $G$ be the Cartesian product of cycles of even length $a_1, \ldots , a_k$ not all equal to $2$. If $\phi_1, \phi_2, \ldots, \phi_k, \phi$ are the automorphisms, that take every vertex to the unique farthest vertex in the graphs $C_{a_1}, C_{a_2}, \ldots, C_{a_k}, G$ respectively, then $d(G,\phi) = \max_{i}{d(C_{a_i},\phi_i)}=\max_{i}{D(C_{a_i})} =\max_{i}{\frac{a_i}{2}-1}$.
\end{conj}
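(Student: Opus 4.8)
The claimed chain of equalities breaks into three independent pieces; only the last is hard. \textbf{One-dimensional identity.} That $D(C_{2m})=d(C_{2m},\phi_m)=m-1$ (and hence $\max_i D(C_{a_i})=\max_i d(C_{a_i},\phi_i)=\max_i(a_i/2-1)=:k^{*}$) follows at once from the Main Theorem. For any $2$-colouring $c$ of $C_n$ the monochromatic components are arcs, consecutive arcs have opposite colours, so $Comp(C_n,c)$ is a single cycle whose length equals the number of maximal monochromatic arcs and is therefore at most $n$; for $n=2m$ this is at most $2m<2(m-1)+3$, so the Main Theorem with $k=m-1$ and \emph{any} automorphism gives a $\phi$-pair joined by a path with $\le m-1$ colour changes, i.e.\ $D(C_{2m})\le m-1$. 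Conversely the proper (alternating) colouring gives $Comp(C_{2m},c)=C_{2m}$ with each arc a single edge, and then $u$ and $\phi_m(u)$ sit in monochromatic components at distance exactly $m-1$ in the component graph, so $d(C_{2m},\phi_m)\ge m-1$.

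\textbf{Lower bound $d(G,\phi)\ge k^{*}$.} Pick $i$, say $i=1$, with $a_1/2-1=k^{*}$ and write $G=C_{a_1}\square H$, $H=C_{a_2}\square\cdots\square C_{a_k}$, $\phi=(\phi_1,\phi_H)$. Lift the extremal colouring: colour each direction-$1$ edge by the alternating colouring of $C_{a_1}$ at its position, and colour every other edge red. Then the red components are the $a_1/2$ ``double layers'' $\{x_1\in\{2j,2j+1\}\}$ and the blue components are single edges; the quotient of $Comp(G,c)$ that forgets the $H$-coordinate is exactly $Comp(C_{a_1},c_0)=C_{a_1}$, so a computation identical to the one-dimensional one --- the all-red directions merge vertices inside red components but create no new adjacency in the component graph --- shows every $\phi$-pair of $G$ still needs at least $a_1/2-1=k^{*}$ colour changes.

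\textbf{Upper bound $d(G,\phi)\le k^{*}$: the plan.} I would prove this by induction on the number $k$ of factors, the inductive step being a ``$\max$-law'' for Cartesian product with a cycle,
\[
d(C_a\square H,\phi)\ \le\ \max\bigl(d(C_a,\phi_a),\,d(H,\phi_H)\bigr),\qquad \phi=(\phi_a,\phi_H),
\]
which with the first part closes the induction. For a fixed colouring $c$ of $C_a\square H$, slice $G$ into the $|V(H)|$ rows $C_a\times\{h\}$ and the $a$ columns $\{x\}\times H$: the one-dimensional result gives in each row a $\phi_a$-pair joined by a path with $\le d(C_a,\phi_a)$ colour changes, and induction gives in each column a $\phi_H$-pair joined by a path with $\le d(H,\phi_H)$ changes. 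The task is to pick one row and one column whose good paths splice into a single path joining some $u$ to $\phi(u)$ \emph{without paying for both pieces}: either the two pieces agree in colour at the splicing vertex, or --- if over all $a$ columns they never do --- that failure should itself force enough monochromatic structure across the rows to yield a $\phi$-pair inside a single monochromatic component. One cannot merely invoke the Main Theorem for $G$: colouring all direction-$1$ edges red and everything else blue makes $Comp(G,c)$ complete bipartite, with cycles far longer than $2k^{*}+2$, even though the true value for that colouring is $1$. So I would apply the Main Theorem to a carefully chosen union of one row and one column, or lift $c$ to the doubly-periodic colouring of $\mathbb{Z}^{k}$ and run a discrete Jordan-curve argument, showing that a colouring with no short $\phi$-connection forces a monochromatic circuit winding around, which in the universal cover passes through two points differing by $(a_1/2,\dots,a_k/2)$, i.e.\ a monochromatic $\phi$-pair.

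\textbf{The obstacle.} The $\max$-law, equivalently the upper bound, is where essentially all the work sits, and it cannot be formal: the identical statement with $K_2$ in place of a cycle is precisely Conjecture~\ref{conj:federsubi}, so any correct argument must use that the cycles have length $\ge 3$. I therefore expect the induction to go through unconditionally only when $H$ is a point or a single even cycle --- that is, for $G=C_{2a}$ and the toroidal grid $G=C_{2a}\square C_{2b}$, where the Jordan-curve argument can be pushed through by hand --- while the general-factor case should remain the conjecture it is stated to be.
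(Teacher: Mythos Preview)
This statement is presented in the paper as a \emph{conjecture}; the paper does not prove it in general. What the paper does establish is (i) the one-dimensional identity $d(C_{2m},\psi)=m-1$, called ``immediate''; (ii) the lower bound $d(G,\phi)\ge\max_i(a_i/2-1)$, called ``straightforward''; and (iii) the case $k=2$, proved as Theorem~\ref{thm:torus}. Your proposal is organised the same way and explicitly flags the general upper bound as the open obstacle, so on the overall diagnosis you and the paper agree.

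\emph{One-dimensional identity and lower bound.} Your arguments here are more detailed than the paper's one-line remarks and are essentially correct. One small inaccuracy: $Comp(C_{2m},c)$ is a cycle only when there are at least four maximal monochromatic arcs; with two arcs it is $K_2$, and with a monochromatic colouring it is a star. This does not damage the application of the Main Theorem, since in every case the longest cycle has length at most $2m<2(m-1)+3$.

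\emph{The case $k=2$.} Here there is a genuine gap between your proposal and the paper. The paper gives a complete proof of Theorem~\ref{thm:torus}, and its method is not a Jordan-curve or universal-cover argument. Instead it fixes the row $y=1$, and from each vertex $(x,1)$ runs a \emph{lazy} $a$-ascending diagonal and a lazy $a$-descending diagonal (shortest paths to $(x\pm a,1+a)$ that postpone colour changes as long as possible). Each colour change along these $4a$ diagonals is charged to the $4$-cycle where it occurs; a short case check shows that any $4$-cycle receives at most two charges in total from the ascending and descending diagonals through it. Averaging gives a diagonal with at most $a$ changes, and a refinement at the starting row brings this down to $a-1$ unless every $4$-cycle is properly coloured, in which case the colouring is directional and one change suffices globally. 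Your phrases ``Jordan-curve argument'' and ``pushed through by hand'' do not constitute a proof at this level; to match the paper you would need either to carry out that topological argument in full or to discover something like the diagonal-averaging trick.

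Your final remark --- that the $\max$-law with a $K_2$ factor is exactly Conjecture~\ref{conj:federsubi}, so no purely formal induction can work --- is correct and is precisely why the paper proves only $k=2$ and leaves the rest conjectural.
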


Here a $2$-cycle denotes the complete graph on two vertices. The assumption that not all cycle lengths are equal to $2$ is to avoid trivial counterexamples. Conjecture \ref{conj:weak} is a generalization of Conjecture \ref{conj:federsubi} as $Q_n$ can be written as the Cartesian product of $4$-cycles, and a $2$-cycle if $n$ is odd. We prove Conjecture \ref{conj:weak} for $k=2$ ie. the rectangular grid graph on the torus. 

The paper is organized as follows. In the next section we discuss random colorings. The third section contains the proof of the Theorem \ref{thm:main}, a similar result in the case of $Q_n$ where we are concerned about induced cycles of $Comp(Q_n,c)$, and the limitation of these theorems to prove Conjecture \ref{conj:federsubi}. The fourth section contains the proof of Conjecture \ref{conj:weak} in the case of $k=2$. Finally in section five we present some open questions similar to Conjecture \ref{conj:federsubi}.

\section{Random colorings}

Let $Q_{n,p}$ denote the random subgraph of $Q_n$ that contains every edge of $Q_n$ independently with probability $p$. The connectivity of $Q_{n,p}$ is extensively studied. The following two theorems are relevant in view of Conjecture \ref{conj:federsubi}. 

\begin{thm}[Erdős, Spencer \cite{erdos}]   

$$ \mathbb{P}(Q_{n,p} \, \text{is connected})= 
\left \{
  \begin{tabular}{cc}
  $0$ & if $p <1/2$ \\
  $\frac{1}{e}$ & if $p=1/2$  \\
  1 & if $p > 1/2$ 
  \end{tabular} \right.
 $$
\end{thm}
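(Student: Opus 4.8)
The plan is to follow the classical route for connectivity thresholds of random (sub)graphs: to show that near the critical value $p=1/2$ the only obstruction to connectivity of $Q_{n,p}$ is the presence of an isolated vertex. Write $q=1-p$ and let $X=X_n$ be the number of isolated vertices of $Q_{n,p}$, so that $\mathbb{E}[X]=2^nq^n=\mu^n$ with $\mu:=2q$; note $\mu>1$, $\mu=1$, $\mu<1$ according as $p<1/2$, $p=1/2$, $p>1/2$.

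First I would dispose of the two density-dependent cases. When $p<1/2$ we have $\mathbb{E}[X]\to\infty$; a second-moment estimate — the only non-negligible off-diagonal contribution comes from the $n2^n$ ordered pairs of adjacent vertices, each such pair being simultaneously isolated with probability $q^{2n-1}$ and contributing $n2^nq^{2n-1}=o(\mathbb{E}[X]^2)$ — yields $\operatorname{Var}(X)=o(\mathbb{E}[X]^2)$, so $X>0$ with probability $1-o(1)$ by Chebyshev, whence $\mathbb{P}(Q_{n,p}\text{ connected})\le\mathbb{P}(X=0)\to0$. When $p>1/2$ we have $\mathbb{E}[X]=\mu^n\to0$, so $\mathbb{P}(X\ge1)\to0$ by Markov. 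When $p=1/2$ I would use the method of factorial moments: for each fixed $r$, $\mathbb{E}[(X)_r]=\sum_{T} q^{\,rn-e(T)}$, where $T$ ranges over ordered $r$-tuples of distinct vertices and $e(T)$ is the number of cube-edges spanned by $T$; this sum is dominated by the $(2^n)^r(1+o(1))$ pairwise-nonadjacent tuples, each contributing $q^{rn}=2^{-rn}$, so $\mathbb{E}[(X)_r]\to1$ for every $r$, whence $X\xrightarrow{d}\operatorname{Poisson}(1)$ and $\mathbb{P}(X=0)\to e^{-1}$.

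What remains — and this is the real content — is the \emph{bridging lemma}: for $p\ge1/2$, with probability $1-o(1)$ the graph $Q_{n,p}$ has no connected component whose size lies between $2$ and $2^{n-1}$. Granting it, a disconnected graph with no isolated vertex would have at least two components, each necessarily of size $>2^{n-1}$, which is impossible; so on $\{X=0\}$ the graph is connected up to an $o(1)$ error, and therefore $\mathbb{P}(Q_{n,p}\text{ connected})=\mathbb{P}(X=0)+o(1)$, which is $1+o(1)$ for $p>1/2$ and $e^{-1}+o(1)$ for $p=1/2$ — exactly the claim. The engine for the bridging lemma is a union bound over candidate components $S$: if $S$ is a component then every edge of its cube-boundary $\partial S$ is absent, an event of probability $q^{|\partial S|}$, and the edge-isoperimetric inequality for the cube (Harper, Bernstein, Hart) gives $|\partial S|\ge|S|(n-\log_2|S|)$. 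Coupling this with the crude bound $2^n(en)^{|S|}$ for the number of connected vertex-sets of a given size, the total contribution of the range $2\le|S|\le2^{\varepsilon n}$ is $\sum_k 2^n(en)^k q^{k(n-\log_2 k)}=o(1)$ once $\varepsilon$ is small.

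The hard part will be the complementary range $2^{\varepsilon n}\le|S|\le2^{n-1}$. Here the isoperimetric bound degenerates to $|\partial S|\gtrsim|S|$, while the number of connected (indeed, of all) vertex-sets of that size is far too large for a naive union bound to close — a difficulty with no analogue for $G(n,p)$, reflecting the poor vertex-expansion of half-sized subsets of the cube. I would settle this range by one of two routes. The first is \emph{sprinkling}: write $q=q_1q_2$ with $0<q_1,q_2<1$, expose $Q_{n,p}=Q^{(1)}\cup Q^{(2)}$ as a union of two independent instances, use $Q^{(1)}$ to produce a dominant component with a weak expansion property, and use the independent edges of $Q^{(2)}$ to absorb into it every vertex that is not isolated in $Q_{n,p}$. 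The second is to feed a \emph{stability} form of the edge-isoperimetric inequality into the union bound — a set whose edge-boundary is close to the isoperimetric minimum must be close to one of the $3^{O(n)}$ subcubes, which bounds $\#\{S:|S|=k,\ |\partial S|=b\}$ sharply enough to beat the factor $q^b$. I expect this large-component estimate to be the only genuine obstacle; the rest is bookkeeping.
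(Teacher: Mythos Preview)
The paper does not prove this theorem: it is quoted verbatim as a result of Erd\H{o}s and Spencer and used only as background for the discussion of random colorings. There is therefore no proof in the paper to compare your proposal against.

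That said, your outline is the standard one and is sound as a plan. The isolated-vertex analysis (second moment for $p<1/2$, first moment for $p>1/2$, factorial moments giving Poisson$(1)$ at $p=1/2$) is correct as written. Your ``bridging lemma'' --- that for $p\ge 1/2$ there is w.h.p.\ no component of size strictly between $2$ and $2^{n-1}$ --- is precisely the content of the Bollob\'as theorem that the paper quotes (again without proof) immediately after the Erd\H{o}s--Spencer statement; so you have correctly identified that the Erd\H{o}s--Spencer result reduces to this structural fact plus the Poisson limit for isolated vertices. Your treatment of small components via Harper's edge-isoperimetric bound together with the crude count of connected subsets is the right engine for the range $2\le |S|\le 2^{\varepsilon n}$.

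The one place where your write-up is a plan rather than a proof is exactly where you flag it: the range $2^{\varepsilon n}\le |S|\le 2^{n-1}$. Both routes you propose (sprinkling, or a stability form of the isoperimetric inequality) are viable and appear in the literature, but each requires real work to execute --- the sprinkling argument in particular needs a quantitative statement about the giant component of $Q_{n,p_1}$ before the second round, not merely its existence. If you intend to submit a self-contained proof, that is the part to write out in full; otherwise a citation to Bollob\'as for the bridging lemma, as the paper itself does, is the honest move.
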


\begin{thm}[Bollobás \cite{bollobas}]  The probability that $Q_{n,1/2}$ contains a connected component of size at least $2$ but at most $2^{n-1}$ tends to zero as $n$ tends to infinity. 
\end{thm}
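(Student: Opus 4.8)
The plan is to prove the statement by the first moment method, bounding the expected number of connected components of each intermediate size and summing over all sizes. For $2 \le s \le 2^{n-1}$ let $X_s$ denote the number of connected components of $Q_{n,1/2}$ with exactly $s$ vertices; it suffices to show $\sum_{s=2}^{2^{n-1}} \mathbb{E}[X_s] \to 0$, since then Markov's inequality gives that the probability of any such component tends to zero. For a fixed vertex set $S$ with $|S|=s$, the event that $S$ is a component forces every edge of the edge-boundary $\partial S$ (the edges of $Q_n$ leaving $S$) to be absent, which has probability $(1/2)^{|\partial S|}$ and is independent of the edges inside $S$; moreover only sets $S$ with $Q_n[S]$ connected can ever be components. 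Hence $\mathbb{E}[X_s] \le \sum_{S \text{ connected},\,|S|=s} (1/2)^{|\partial S|}$, and the two inputs are a lower bound on $|\partial S|$ and an upper bound on the number of candidate sets.

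For the boundary I would invoke the edge-isoperimetric inequality on the cube (Harper/Bernstein): every $S$ of size $s$ satisfies $|\partial S| \ge s(n-\log_2 s)$, with equality for subcubes. For the count, the number of connected vertex subsets of size $s$ in a graph of maximum degree $n$ is at most $2^n (en)^{s}$. Combining these,
\[
\log_2 \mathbb{E}[X_s] \;\le\; n + s\log_2(en) - s(n-\log_2 s) \;=\; n + s\bigl(\log_2(ens)-n\bigr).
\]
For $s$ below roughly $2^n/(en)$ the bracket is negative and of order $-n$, so each term is $2^{-\Omega(n)}$ and the partial sum over this range tends to zero; the $s=2$ term already gives the expected $n\,2^{-n}$ behaviour, consistent with the Erd\H{o}s--Spencer constant $1/e$ coming from the Poisson number of isolated vertices.

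The main obstacle is the \emph{medium-to-large} range $2^n/(en) \le s \le 2^{n-1}$, where the first moment breaks down: even multiplying the count of \emph{all} size-$s$ sets by the minimal weight $(1/2)^{\min|\partial S|}$ gives an exponentially large bound, because almost all sets of this size are far from the isoperimetric extremizers and the minimum-boundary estimate is far too lossy. To handle this range I would replace the first moment by a two-round exposure (\emph{sprinkling}) argument in the spirit of Ajtai--Koml\'os--Szemer\'edi. Writing $1/2=(1-p_1)(1-p_2)$, assign each edge two independent coins so that round one is $Q_{n,p_1}$ and round two adds each still-absent edge with probability $p_2$. By the vertex-isoperimetric inequality (Harper's theorem) every set $S$ with $2^n/(en)\le|S|\le 2^{n-1}$ has vertex boundary $2^{n-o(n)}$, hence edge boundary $B\ge 2^{\Omega(n)}$; if such a set survives as a component its boundary edges are absent in both rounds, in particular in the fresh round two. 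The decisive point is that the relevant union bound ranges only over the at most $2^n$ components actually present, not over all $\binom{2^n}{s}$ sets, so the failure probability is at most $2^n (1-p_2)^{B}\le 2^n\,2^{-2^{\Omega(n)}}\to 0$.

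The step I expect to require the most care is organizing the sprinkling so that every medium component of the \emph{final} graph is charged to a structure with a genuinely large supply of round-two-unexposed boundary edges. Since final components are unions of round-one components, one must argue, using Harper's inequality together with the AKS description of the round-one giant, that any final component of size between $2^n/(en)$ and $2^{n-1}$ still retains $2^{\Omega(n)}$ boundary edges available to the sprinkling round. Matching this doubly-exponential connection probability $2^{-2^{\Omega(n)}}$ against the merely single-exponential number $2^n$ of candidate components is exactly what makes the medium range close; the small-$s$ range, by contrast, is routine once the edge-isoperimetric inequality is in hand.
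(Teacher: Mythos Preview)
The paper does not contain a proof of this statement. It is quoted as a known theorem of Bollob\'as (from his 1983 paper \emph{The evolution of the cube}) to motivate the discussion of random colorings, and no argument is supplied. There is therefore nothing in the paper to compare your proposal against.

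For what it is worth, your outline is a reasonable sketch of how results of this type are proved and is close in spirit to the original argument. The first-moment bound for small components via the edge-isoperimetric inequality and the $(en)^s$ count of connected sets is standard and correct; the breakdown near $s\approx 2^n/\mathrm{poly}(n)$ that you flag is real. For the medium-to-large range the two-round sprinkling idea is indeed the right tool, and you correctly isolate the delicate point: one must arrange that any \emph{final} component of intermediate size still has $2^{\Omega(n)}$ boundary edges genuinely fresh in round two, so that the doubly-exponential survival probability beats the single-exponential union over the at most $2^n$ components actually present. In practice this is usually done by first establishing a unique giant after round one (via AKS-style arguments) and then showing round two absorbs everything of intermediate size; your description of the obstacle is accurate, though turning it into a rigorous proof takes more work than the sketch suggests.
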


This means that for $p=1/2$ the usual reason of disconnectedness is the presence of isolated vertices. Thus in a typical coloring of the $n$-cube, for both colors, there is a giant monochromatic component with more than half of the vertices in it (and the other monochromatic components are isolated vertices). In these cases, the conjecture holds without a color change, and the reason for it is the size of the largest monochromatic component. In general we cannot expect this. Consider the following coloring: 

\begin{example}[Directional coloring] \label{dircol} For $i \leq k$ let the edges of $Q_{2k}$ be colored red if the edges face the $i$-th direction, and blue otherwise. 
\end{example}

In this coloring every monochromatic connected component is a $k$-dimensional subcube. Thus the size of the maximal monochromatic component can be as small as the square root of the number of vertices. One might suspect that while in general the size of the maximal monochromatic component is not large enough, but it is not a problem, as we can have a color change. So maybe there is always a pair of antipodal vertices connected by a one-switch path, such that one of them is in a monochromatic component of maximal size. This is not true either as it can be seen in the following example. 

\begin{example} Let $n=m+k$, color the edges of $Q_n$ as follows: The edges facing in the first $m/2$ directions are red except when the last $k$ coordinates are all ones. The edges facing in the second $m/2$ directions are red only when the last $k$ coordinates are all zeros. An edge facing the last $k$ directions is colored red if: when we change the coordinate corresponding to that edge to one, the resulting vector will have an even number of ones in the last $k$ coordinates. All the other edges are blue. 
\end{example}

We will set $k=m/4$, but the construction is visualized easier if we use $m$ and $k$: Ignore the first $m$ coordinates and we have a $k$-dimensional cube that is colored according to its levels. Now taking into account the first $m$ coordinates, every vertex of the $k$-cube becomes an $m$-cube, and is colored by a directional coloring, except for the all-zero and the all-ones vertex of the $k$-cube. They become monochromatic red and blue $m$-cubes respectively. Thus there are two monochromatic components of size $2^m$, and all the others are of size at most $2^{m/2+1}  \binom{k}{k/2}$. Thus elementary estimates show that if $k < m/4$ and $k$ is large enough, we have that the largest monochromatic components are the ones that form the monochromatic $m$-cubes. But because of the coloring of the last $k$ coordinates, we need at least $k-1$ color changes (which is $\Omega(n)$) from any vertex of the largest (red or blue) component to its antipodal pair. So we see that the size of the monochromatic components is not sufficient to guarantee antipodal pairs joined by a one-switch path. We have to take into account the way these components connect to each other. A natural way to do this is to define the component graph, see Definition \ref{comp}. 

It is clear that the component graph is connected if and only if $G$ is connected, and it is always bipartite. For any vertex in $G$ there are exactly two vertices corresponding to it in $Comp(G,c)$, the two monochromatic components containing that vertex. The most useful property of the component graph is that the minimal number of color changes to get from $u$ to $v$ in the colored graph is the same as the minimum distance between components corresponding to these vertices in the component graph. It follows from the above mentioned theorem of Bollobás, that the component graph of $Q_n$ is typically a tree, but the directional coloring is an example showing that it is not always the case: When $Q_n$ is colored directionally, $Comp(Q_n,c)$ is a complete bipartite graph.

\section{$Q_n$ and the component graph}

In this section we prove Theorem \ref{thm:main}. Roughly speaking it shows that for the need of high number of color changes, we need long cycles in $Comp(G,c)$. For convenience we restate Theorem \ref{thm:main} 
\setcounter{thm}{1}
\begin{thm} 
Let $G$ be a graph, $\phi \in Aut(G)$ an automorphism, $c$ a $2$-coloring of $E(G)$ and $0 \leq k$ an integer. If the length of the longest cycle in $Comp(G,c)$ is less than  $2k+3$, then there is a vertex $u \in V(G)$ such that there is a path connecting $u$ and $\phi(u)$ that changes colors at most $k$ times. 
\end{thm}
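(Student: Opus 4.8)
The plan is to argue by contradiction through the component graph $H:=Comp(G,c)$. Recall that each vertex $u\in V(G)$ lies in exactly one red and one blue monochromatic component, which together form an edge $\ell(u)=\{R(u),B(u)\}$ of $H$. By the stated correspondence between paths in $G$ and walks in $H$ (plus its converse: one may walk freely inside any monochromatic component, since it is connected), the minimum number of color changes on a path from $u$ to $\phi(u)$ equals $\delta_H\bigl(\ell(u),\ell(\phi(u))\bigr):=\min\{d_H(X,Y):X\in\ell(u),\,Y\in\ell(\phi(u))\}$. So it suffices to find $u$ with $\delta_H(\ell(u),\ell(\phi(u)))\le k$; assume toward a contradiction that $\delta_H(\ell(u),\ell(\phi(u)))\ge k+1$ for every $u$. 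Vertices fixed by $\phi$ are handled at once (the trivial path), so $\phi$ is fixed-point free on the orbit we shall use.

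Next I would build a $\phi$-symmetric closed walk in $G$. Pick a vertex $v_0$ that minimizes, over all vertices, the minimum number of color changes between a vertex and its $\phi$-image; let $P$ realize this minimum from $v_0$ to $\phi(v_0)$, with $t\ge k+1$ color changes, and let $r$ be the order of $\phi$. Concatenating $P,\phi(P),\dots,\phi^{r-1}(P)$ gives a closed walk $W=(u_0,\dots,u_{N-1})$ in $G$ with $N=rm$ ($m=$ length of $P$) and the symmetry $u_{i+m}=\phi(u_i)$. Projecting $W$ to a closed walk $\widetilde W$ in $H$, each of the $r$ blocks projects to a walk joining an incidence edge of some $\phi^{j}(v_0)$ to one of $\phi^{j+1}(v_0)$, hence of length at least $\delta_H(\ell(\phi^jv_0),\ell(\phi^{j+1}v_0))\ge k+1$; therefore $\widetilde W$ has length $L\ge r(k+1)$. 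When $r\ge 3$ we already have $L\ge 3(k+1)>2k+2$, and since $\widetilde W$ visits the $r$ pairwise-$(k{+}1)$-far ``markers'' $\ell(\phi^jv_0)$ in cyclic order, one can read off a cycle of $H$ of length at least $2k+3$ (no shorter cycle can accommodate three markers that are pairwise at distance $\ge k+1$), contradicting the hypothesis.

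The remaining case, $r=2$ (which includes the hypercube with $\phi$ the antipodal map), is where the work is, and I expect it to be the main obstacle, because a closed walk of length $2k+2$ need not contain a long cycle. Here I would track, as $i$ runs once around $W$, the position of $u_i$ and of $\phi(u_i)=u_{i+m}$ inside $\widetilde W$; the hypothesis forces the offset between these two positions to remain within a window of width $L-(2k+2)$, so if $L=2k+2$ the offset is pinned, which forces $\widetilde W$ itself to be a cycle of length exactly $2k+2$ in which opposite vertices lie at distance precisely $k+1$. One then exploits that the offset must sweep continuously through every intermediate value together with the minimality of the choice of $v_0$ (so that no vertex of $W$, in particular none ``half way'' along $P$, can do better than $k+1$): this yields a value that is simultaneously forced to equal $0$ or $2k+2$ and to be strictly between them, a contradiction — so in fact $L\ge 2k+4>2k+3$, producing a cycle of $H$ of length at least $2k+3$. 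The delicate point throughout the $r=2$ analysis is that $\phi$ need not preserve $c$, so $\widetilde W$ carries no symmetry of its own; the symmetry $u_{i+m}=\phi(u_i)$ of $W$ must be used directly, and the ``dangling'' incidence edges (vertices in the interior of a monochromatic block of $W$) must be controlled by passing to block-boundary vertices of $W$.
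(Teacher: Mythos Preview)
Your plan tries to extract a long cycle in $H=Comp(G,c)$ from the closed walk $\widetilde W$ obtained by projecting $P\cdot\phi(P)\cdots\phi^{r-1}(P)$. The central gap is that a long closed walk in $H$ need not contain any long cycle: if $H$ is a tree (which the hypothesis allows, and which is exactly the headline case $k=0$), $\widetilde W$ can be arbitrarily long while $H$ has no cycles whatsoever, so ``reading off'' a cycle fails outright. Concretely, for $r\ge 3$ you assert that the markers $\ell(\phi^j v_0)$ are \emph{pairwise} at distance $\ge k+1$, but the contradiction hypothesis concerns $\phi$ only, not its powers, so you only know this for consecutive $j$; and even three genuinely pairwise $(k{+}1)$-far points can sit on a long path with no cycle present. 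For $r=2$, pinning the offset $q(i)$ to $k+1$ when $L=2k+2$ does force $\widetilde W$ to be a simple $(2k{+}2)$-cycle (each half is a geodesic), but a $(2k{+}2)$-cycle is \emph{permitted} by the hypothesis, so there is no contradiction yet; your final step ``so in fact $L\ge 2k+4$'' is not derived from anything written, and even granting it you would still have to rule out backtracking walks before getting a long cycle.

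The paper's proof avoids closed walks entirely and runs a direct chase in $H$. For a component $a$ one sets $S(a)=\{\,b\in V(H):\phi(a)\cap b\ne\emptyset\,\}$ and proves two easy facts: $S(a)$ spans a connected subgraph, and adjacent $a,b$ satisfy $S(a)\cap S(b)\ne\emptyset$. If $B_k(a)\cap S(a)=\emptyset$, then the ``no cycle of length $\ge 2k+3$'' hypothesis together with Menger's theorem forces a single cut vertex $s$ between $a$ and the boundary $H_i$ of the component $X$ of $H\setminus B_k(a)$ containing $S(a)$; replacing $a$ by its neighbour toward $s$ swallows $H_i$ into the new ball and (via the two facts above) strictly shrinks $X$. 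Iterating must terminate with $B_k(a)\cap S(a)\ne\emptyset$, which is the desired conclusion. The cycle bound is used solely to manufacture that cut vertex; this structural use of the hypothesis is the ingredient your closed-walk argument is missing.
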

\setcounter{thm}{3}
 \begin{proof}
The vertices of $Comp(G,c)$ are subsets of $V(G)$, thus for any $a \in V(Comp(G,c))$ we can define its image under $\phi$ as follows: $\phi(a):= \{ \phi(x) \, | \, x \in a \}$. We denote the set of components of the image of $a$ under $\phi$ by 
$$S(a):= \{ b \in V(Comp(G,c)) \, | \, \exists x \in a : \phi(x)\in b \}.$$
Having $u$ and $\phi(u)$ in the same monochromatic component $a$, is equivalent to $a \cap S(a) \neq \emptyset$. Having a path between $u$ and $\phi(u)$ that changes its color at most $k$ times is equivalent to $B_k(a) \cap S(a) \neq \emptyset$ where $B_k(a)$ is the closed ball of radius $k$ centered at $a$. Our aim is to show that there is such an $a \in V(Comp(G,c))$. To do this we need two easy properties of $S(a)$. 

\begin{lem} \label{properties} The following statements hold.
\begin{enumerate} 
\item If $a \in V(Comp(G,c))$, then the vertices of $S(a)$ span a connected subgraph. 
\item If $a,b\in V(Comp(G,c))$ are neighbors, then $S(a) \cap S(b) \neq \emptyset$.
\end{enumerate}
\end{lem}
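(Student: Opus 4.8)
The plan is to deduce both statements from two elementary facts: that a graph automorphism maps a connected subgraph to a connected subgraph, so $\phi(a)$ induces a connected subgraph of $G$ whenever $a$ does (and every monochromatic component does); and the property of $Comp(G,c)$ already noted above, that a path in $G$ gives rise to a walk in $Comp(G,c)$ visiting exactly the monochromatic components encountered along the path.

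For the second statement I would argue directly. If $a$ and $b$ are neighbors in $Comp(G,c)$ they share some vertex $z \in V(G)$, so $\phi(z) \in \phi(a) \cap \phi(b)$; picking any monochromatic component $d$ that contains $\phi(z)$, the definition of $S(\cdot)$ gives $d \in S(a)$ (witnessed by $z \in a$) and $d \in S(b)$ (witnessed by $z \in b$), hence $d \in S(a) \cap S(b)$.

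For the first statement, fix $a$ and take arbitrary $b_1, b_2 \in S(a)$; the goal is to join them by a walk inside $S(a)$. I would choose witnesses $x_1, x_2 \in a$ with $\phi(x_i) \in b_i$, use connectedness of the monochromatic component $a$ to get a path from $x_1$ to $x_2$ staying in $a$, and apply $\phi$ to obtain a path $\phi(x_1) = y_0, y_1, \dots, y_m = \phi(x_2)$ with all $y_j \in \phi(a)$. Letting $M_j$ be the monochromatic component containing the edge $y_j y_{j+1}$, each $M_j$ lies in $S(a)$ (since $y_j \in \phi(a)$) and consecutive components $M_{j-1}, M_j$ share the vertex $y_j$, while $b_1$ shares $y_0$ with $M_0$ and $b_2$ shares $y_m$ with $M_{m-1}$; thus $b_1, M_0, \dots, M_{m-1}, b_2$ is a walk in $Comp(G,c)$ lying entirely in $S(a)$, with the degenerate cases $m=0$ or $b_1 = b_2$ handled trivially. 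Since $b_1, b_2$ were arbitrary, $S(a)$ induces a connected subgraph.

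I do not expect a genuine obstacle here — the authors flag these as easy — so the main thing to be careful about is bookkeeping: using connectedness of $a$ (not merely of $\phi(a)$) so that the image path stays inside $\phi(a)$, invoking the walk-in-$Comp(G,c)$ principle correctly, and applying the definition of $S(a)$ with the right pre-image witness at each step, which is the only place a slip could creep in.
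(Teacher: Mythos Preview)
Your proposal is correct and matches the paper's own argument essentially line for line: for part 2 you use a common vertex $z\in a\cap b$ and observe that any monochromatic component through $\phi(z)$ lies in both $S(a)$ and $S(b)$; for part 1 you pick witnesses in $a$, take a path inside the connected set $a$, push it forward by $\phi$ into $\phi(a)$, and read off a walk in $Comp(G,c)$ through $S(a)$. The only difference is cosmetic---you track components along edges where the paper tracks them along vertices, and you pass through $a$ before applying $\phi$ whereas the paper works directly in $\phi(a)$---but the content is identical.
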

\begin{proof}
The first statement holds by the following argument. By the definition of $S(a)$, for any two vertices $c_1,c_2 \in S(a) \subseteq V(Comp(G,c))$, there are two vertices $x_1,x_2 \in \phi(a) \subseteq V(G)$ such that $x_i \in c_i$ for $i=1,2$. As $\phi(a)$ is connected there is a path from $x_1$ to $x_2$ in $\phi(a)$, and the components of the vertices of this path give a path from $c_1$ to $c_2$ in $S(a)$.

The second statement follows because if $a$ and $b$ are neighbors in $Comp(G,c)$, then by the definition of $Comp(G,c)$, there is a vertex $x \in a \cap b$ in $G$. Thus the  components containing $\phi(x)$ are both in $S(a)$ and $S(b)$. 
\end{proof}


Our strategy is as follows, we will define $a_0,a_1 \ldots \in V(Comp(G,c))$. Let $X_i$ denote the connected component of $V(Comp(G,c)) \setminus B_k(a_i)$ containing $S_(a_i)$. For every $i$ we will either have that $B_k(a_i) \cap S(a_i) \neq \emptyset $ or $X_i \subset X_{i-1}$. This way we are done since for every $i$ we have that $X_i \neq \emptyset$. During the following part of the proof, see Figure \ref{figureone}.

Suppose that we already defined $a_0,\ldots,a_{i}$ and $B_k(a_{i}) \cap S(a_{i}) = \emptyset$. We will define $a_{i+1}$ such that either $a_{i+1}\cap S(a_{i+1}) \neq \emptyset$ or $X_{i+1} \subset X_{i}$. Let us denote the vertices of the neighborhood of $B_k(a_{i})$ in $X_{i}$ by $H_i := h_{i,1},\ldots,h_{i,j}$.  The set of vertices in $H_i$ forms a cut and is also an independent set as they are of the same distance (exactly $k+1$) from $a_{i}$ ($Comp(G,c)$ is bipartite). There cannot be two vertex disjoint paths connecting $a_{i}$ and $H_i$ since two such paths would result in a cycle of length at least $2k+4$ by the following argument: Take the shortest two of such paths, they cannot use any edges from the subgraph spanned by $X_i$. The concatenation of these paths is a path of length at least $2k+2$ connecting vertices $h_{i,u}$ and $h_{i,v}$. Because $X_i$ is connected, there is a path of length at least $2$ using only edges in $X_i$ connecting $h_{i,u}$ and $h_{i,v}$. As these paths are vertex disjoint we can form a cycle of length at least $2k+4$, a contradiction. Thus by Menger's theorem there is a vertex $s_i$ such that every path from $a_i$ to $H_i$ contains $s_i$.

\begin{figure}
\centering
\begin{tikzpicture}
 \draw[thick] (6.9,-0.2) arc (140:220:3)node[anchor=west] {$B_k(a_i)$};
 \draw[thick] (5,0) arc (140:220:3.5)node[anchor=west] {$B_{k+1}(a_i)$};
\filldraw[black] (10,-2) circle (2pt)node[anchor=west] {$a_i$};
\filldraw[black] (8,-1.8) circle (2pt)node[anchor=south] {$s_i$};
 \filldraw[black] (5.8,-1.0) circle (2pt)node[anchor=east] {$h_{i,1}$};
 \filldraw[black] (5.8,-1.5) circle (2pt)node[anchor=east] {$h_{i,2}$};
 \filldraw[black] (5.8,-2.1) circle (0.5pt);
 \filldraw[black] (5.8,-2.25) circle (0.5pt);
 \filldraw[black] (5.8,-2.4) circle (0.5pt);
 \filldraw[black] (5.8,-3.0) circle (2pt)node[anchor=east] {$h_{i,j}$};
\draw[thick] (5.5,-2) ellipse (0.6 and 1.8); 
\draw[thick] plot [smooth,tension=1.2] coordinates {(5.5,-0.2)(3.4,-0.3)(2.2,-0.9)(4.2,-1.5)(1,-2)(3,-4)(5.5,-3.8)}; 
\node($X_i$) at (2,-4){$X_i$};
\node($H_i$) at (5.8,0){$H_i$};
\node($Sa_i$) at (3,-2.7){$S(a_i)$};
\draw[thick] plot [smooth cycle,tension=1] coordinates {(3.5,-2.7)(3.6,-1.9)(3.07,-2.35)(2.3,-2.1)(2.35,-2.7)(2.4,-3.2)(3,-3.05)(3.62,-3.3)}; 
\draw[thick] plot coordinates {(10,-2)(9.4,-1.4)(8.9,-2.1)(8.4,-1.5)(8,-1.8)}; 
\draw[thick] plot coordinates {(8,-1.8)(7.4,-1.4)(7,-1.7) (6.6,-1.25) (5.8,-1.0) }; 
\draw[thick] plot coordinates {(6.6,-1.25)(5.8,-1.5) }; 
 \filldraw[black] (7.45,-2.0) circle (0.5pt);
 \filldraw[black] (7.45,-2.15) circle (0.5pt);
 \filldraw[black] (7.45,-2.3) circle (0.5pt);
\draw[thick] plot coordinates {(8,-1.8)(7.5,-2.8)(7.1,-2.4)(6.6,-3)(5.8,-3.0) };
\end{tikzpicture}

\caption{Choosing $a_{i+1}$} \label{figureone}
\end{figure}
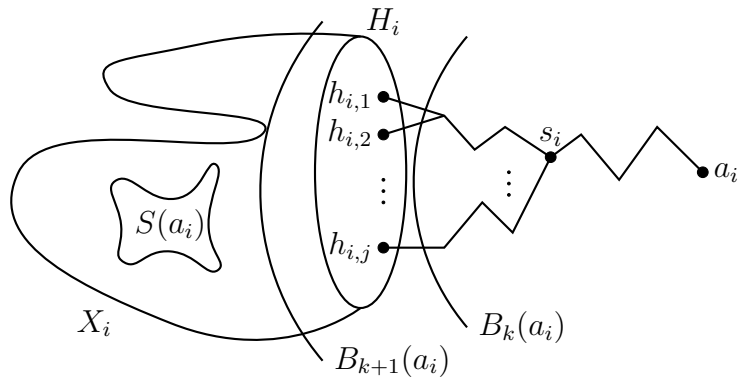

 Let us choose $a_{i+1}$ to be the neighbor of $a_i$ in a shortest path connecting $a_i$ and $s_i$. This way we have that $H_i \subset B_k(a_{i+1})$. If $B_k(a_{i+1}) \cap S(a_{i+1}) \neq \emptyset $ we are done. Otherwise $S(a_{i+1}) \cap H_i = \emptyset$ and by lemma \ref{properties} we have that $S(a_{i+1}) \cap S(a_i) \neq \emptyset$ thus $S(a_{i+1}) \subseteq X_i$. And therefore $X_{i+1} \subset X_i$ also holds as $X_{i+1} \subseteq  H_i = \emptyset$ and the proof is complete. \end{proof}


\begin{remark} Theorem \ref{thm:main} is best possible in the sense that $2k+3$ cannot be replaced by $2k+4$ as the following example shows. Color the edges of a cycle of length $2k+4$ properly, and choose $\phi$ to be the automorphism that takes every vertex to the unique farthest vertex. The component graph is also a cycle of length $2k+4$ and there is no path connecting antipodal vertices with $k$ color changes. 
\end{remark}
\begin{remark}Theorem \ref{thm:main} is not best possible in the sense that there are plenty of component graphs with cycles larger than $2k+4$ where we can  "catch $S(a)$ with $a$". For example consider the complete bipartite graph $K_{m,m}$, no matter how large $m$ is, $B_2(a_0)$ covers the whole graph. Thus our result can be strengthened in some cases by first decomposing $V(G)$ into parts of small diameter. The exact characterization of graphs where we can catch $S(a)$ with $a$ is not known to us. 
\end{remark}
\begin{remark}
The fact that Theorem \ref{thm:main} holds for arbitrary graphs and automorphisms, could in principle be used when trying to prove Conjecture \ref{conj:federsubi} in the following way: Given a coloring of $E(Q_n)$ we would reduce the size of the cycles in $Comp(Q_n,c)$ by deleting edges from $Q_n$. In general, if we delete a full orbit of an edge in $G$ (an antipodal pair in $Q_n$), the same function on the vertices remains an automorphism. Thus it is possible to delete edges of $Comp(Q_n,c)$ by deleting a set of edges (and their antipodal pairs) in $Q_n$. But one shall be careful not to disconnect the resulting graph in such a way, that $x$ and $\phi(x)$ would be in different connected components for all $x$. It would be nice not to disconnect the graph at all. We do not know any example of $Comp(Q_n,c)$ where we can not reduce the size of the maximal cycle to $4$ this way. But we could not prove either that it can always be done. 
\end{remark}

The next theorem shows that in the case of $Q_n$ for the necessity of a high number of color changes we also need long { \em induced} cycles. 

\begin{thm} \label{thm:induced} If $c$ is a $2$-coloring of $E(Q_n)$ such that every path connecting antipodal vertices changes colors at least $k>1$ times, then the component graph $Comp(Q_n,c)$ contains an induced cycle of length at least $2k-2$.
\end{thm}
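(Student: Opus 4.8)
The plan is to argue by contradiction on the length of the longest induced cycle in $Comp(Q_n,c)$, using the same ball-covering strategy as in Theorem \ref{thm:main} but exploiting the extra structure of $Q_n$: namely that antipodality is an involution, that $Q_n$ has a rich automorphism group, and that chordless cycles in $Comp(Q_n,c)$ correspond to "genuinely long" detours in the coloring. First I would recall from the proof of Theorem \ref{thm:main} that the obstruction to catching $S(a)$ inside $B_k(a)$ was the existence of a long cycle passing through $a$ and a cut vertex $s_i$; the key observation is that if that cycle has a chord, then one can shortcut one of the two internally disjoint paths and obtain a strictly shorter pair of paths between $a$ and $H_i$, hence eventually a genuinely \emph{induced} cycle as the extremal object. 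So the first step is to set up the contradiction: assume every antipodal path needs at least $k$ color changes but the longest induced cycle in $Comp(Q_n,c)$ has length at most $2k-3$ (equivalently $<2k-2$), and aim to produce an antipodal pair joined by a path with at most $k-1$ color changes.

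Second, I would run the iteration $a_0, a_1, \ldots$ exactly as in Theorem \ref{thm:main}, but now with radius $k-1$ balls, and at each step where the iteration fails to terminate I would extract not just a cycle but, by repeatedly using chords to replace the two shortest $a_i$-to-$H_i$ paths by shorter ones, an \emph{induced} cycle through $a_i$. The arithmetic should give: two internally disjoint paths from $a_i$ to $H_i$ each of length $\geq k$ (since $H_i$ sits at distance $k$ from $a_i$ in the radius-$(k-1)$ version), closed up through $X_i$ by a path of length $\geq 2$, yielding an induced cycle of length $\geq 2k-2$ — the contradiction. The point where the involutive nature of $\phi$ (antipodality) enters is in guaranteeing that the process is symmetric: $\phi(S(a)) \ni a$-type statements hold because $\phi^2 = \mathrm{id}$, so $a \in S(a)$ is equivalent to $a$ meeting its own image, and this symmetry is what forces $S(a_i)$ to persist in shrinking connected pieces rather than escaping. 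I would also want the bipartiteness of $Comp(Q_n,c)$ (already noted in the excerpt) to control parities so that "$\geq 2k-2$" is attained rather than missed by one.

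The main obstacle I anticipate is the chord-elimination step: showing that whenever the non-terminating case produces a short cycle \emph{with} a chord, one can genuinely reduce to a shorter configuration without destroying the properties (connectivity of $X_i$, the cut structure of $H_i$, the relation $S(a_{i+1}) \cap S(a_i) \neq \emptyset$ from Lemma \ref{properties}) that drive the induction. In particular one must check that a chord of the putative cycle either shortcuts one of the two $a_i$–$H_i$ paths (good, restart the extraction) or lies entirely inside the $X_i$-part (in which case it shortens the closing path but that path is still of length $\geq 2$, so the bound degrades only to exactly $2k-2$). Making this dichotomy airtight — and ruling out chords that connect the two "sides" in a way that neither shortens a path nor stays inside $X_i$ — is where the specific geometry of $Q_n$ versus a general graph is presumably needed, and I expect that is where the hypothesis $G = Q_n$ (as opposed to the arbitrary $G$ of Theorem \ref{thm:main}) is actually used, perhaps via the fact that antipodal classes of edges behave well under the shortcutting.
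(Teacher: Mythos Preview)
Your proposal has a genuine gap, and the fix you anticipate is not the one the paper uses.

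The iteration from Theorem~\ref{thm:main} does produce, at the step where Menger fails, a cycle of length at least $2k+2$ (with radius-$(k-1)$ balls). But a long cycle does not in general yield a long \emph{induced} cycle, and your chord dichotomy is incomplete. Chords between the two $a_i$--$H_i$ paths at internal vertices $p\in P_1$, $q\in P_2$ with distances $d_1,d_2$ from $a_i$ give a cycle through $a_i$ of length $d_1+d_2+1$, which can be as small as $4$; the complementary cycle through $H_i$ has length $2k+3-d_1-d_2$, which can also be small. Nothing in your setup prevents both halves from falling below $2k-2$. Your hope that ``the specific geometry of $Q_n$'' rescues this is misplaced: as the paper remarks after the proof, only the fact that $\phi^2=\mathrm{id}$ is used.

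The paper's argument is entirely different and does not iterate. Take a path $x_0,\ldots,x_t$ between antipodal vertices realising the minimum $k$ colour changes; this gives a geodesic $A=a_0,\ldots,a_k$ in $Comp(Q_n,c)$. Now apply $\phi$: since $\phi$ is an involution, $y_i:=\phi(x_i)$ is \emph{also} a path between the same antipodal pair (reversed), so its component-walk $B$ runs from $a_k$ back to $a_0$, and $A$ together with a shortest path $D\subseteq B\cup\{a_0,a_k\}$ form a cycle of length at least $2k$. The chord control now comes for free from the construction: every vertex $d_j$ of $D$ contains some $\phi(x_m)$, while the component containing $x_m$ lies on $A$. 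Hence if $d_j$ were adjacent to any $a_\ell$ with $2\le \ell\le k-2$, all of $A$ would sit inside $B_{k-1}(d_j)$, giving an antipodal pair joined with at most $k-1$ colour changes --- a contradiction. So chords can only touch $a_1$ or $a_{k-1}$, and peeling those off costs at most two, leaving an induced cycle of length at least $2k-2$. The key idea you are missing is to build the second side of the cycle from the $\phi$-image of the optimal path itself, which is exactly what makes the chord analysis tractable.
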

\begin{proof} Let $x_0, \ldots , x_t$ be vertices of a path in $Q_n$ connecting antipodal vertices with $k$ color changes, and $A := \{a_0, \ldots , a_k \}$ be the corresponding path in $Comp(Q_n,c)$.  For every $i$, let $y_i$ denote the antipodal pair of $x_i$ and $B:= \{b_0, \ldots , b_{m_1} \}$ be the walk in $Comp(Q_n,c)$ associated to the path $y_0, \ldots, y_t$. The intersection of the sets $A \setminus \{a_0,a_k\}$ and $B$ must be empty. Otherwise the whole path $A$ would be in $B_{k-1}(b_j)$ for some $j$ and by definition, we have a vertex in $b_j$ that has its antipodal pair in a component in $A$. This contradicts our assumption that we need $k$ color changes from every vertex to its antipodal pair. Let us denote the shortest path from $a_0$ to $a_k$ in the subgraph of $Comp(Q_n,c)$ induced by $b_0, \ldots,b_{m_1},a_0,a_k $ by $D:=\{a_0=d_0,d_1, \ldots, d_{m_2},d_{m_2+1}=a_k \}$. We have that $k \leq m_2+1$ otherwise we would have a path connecting $a_0$ and $a_k$ of length less than $k$. The vertices $C:=\{a_0,a_1 ,\ldots,a_{k-1},a_k,d_{m_2},d_{m_2-1},\ldots , d_1\}$ form a cycle in $Comp(Q_n,c)$ of length at least $2k$. If there are no chords in $C$ we are done. If there are chords in $C$ we will find a large induced cycle among the subgraphs of $C$. Due to the fact that $a_0$ and $a_k$ are connected by shortest paths, there are no chords connecting them to any other vertex of $C$, moreover the two subgraphs spanned by $\{a_1, \ldots , a_k\}$ and $\{d_1, \ldots , d_{m_2}\}$ are paths. Thus we can only have chords connecting vertices from $\{a_1, \ldots, a_{k-1}\}$ to vertices in $\{d_1, \ldots, d_{m_2}\}$. But $C$ can not have a chord connecting a vertex in $\{a_2, \ldots, a_{k-2}\}$ to a vertex in  $\{d_1, \ldots, d_{m_2}\}$ as a chord of this type would mean that the whole path $A$ would be in $B_{k-1}(d_j)$ where $d_j$ is the endpoint of the chord. So every chord in $C$ is adjacent either to $a_1$ or to $a_{k-1}$. 

If there exist chords adjacent to $a_1$ and $a_{k-1}$ (see Figure \ref{figuretwo}) we define $d_{a_1}$  and $d_{a_{k-1}}$ to be the neighbors of $a_1$ and $a_{k-1}$ respectively such that the distance between $d_{a_1}$ and $d_{a_{k-1}}$ is minimal in $D$. By this minimality, the cycle $a_1 \ldots, a_{k-1},d_{a_{k-1}},\ldots, d_{a_1}$ is induced, and of length at least $2k-2$ as otherwise we could reach every $a_1, \ldots, a_k$ from $d_{a_1}$ with paths shorter than $k-1$, a contradiction.
\begin{figure}
\centering
\begin{tikzpicture}
 \filldraw[black] (4,4) circle (2pt)node[anchor=south] {$a_0$};
 \filldraw[black] (2.6,3.3) circle (2pt)node[anchor=east] {$a_1$};
 \filldraw[black] (5.1,3.1) circle (2pt)node[anchor=west] {$d_{a_{1}}$};
 \filldraw[black] (2.8,0.5) circle (2pt)node[anchor=east] {$a_{k-1}$};
 \filldraw[black] (4.85,0.6) circle (2pt)node[anchor=west] {$d_{a_{k-1}}$};
 \filldraw[black] (4,0) circle (2pt)node[anchor=north] {$a_k$};
\draw[thick] plot coordinates {(4,4)(2.6,3.3)(3.3,2.6)(2.7,1.9)(3.4,1.2)(2.8,0.5)(4,0)}; 
\draw[thick] plot [smooth,tension=1.2] coordinates {(4,4)(5.5,2)(4,0)}; 
\draw[thick] plot coordinates {(2.8,0.5)(4.5,0.3)}; 
\draw[thick] plot coordinates {(2.8,0.5)(4.73,0.5)}; 
\draw[thick] plot coordinates {(2.8,0.5)(4.85,0.6)}; 
\draw[thick] plot coordinates {(2.6,3.3)(4.5,3.7)}; 
\draw[thick] plot coordinates {(2.6,3.3)(4.85,3.4)}; 
\draw[thick] plot coordinates {(2.6,3.3)(4.95,3.3)}; 
\draw[thick] plot coordinates {(2.6,3.3)(5.1,3.1)}; 
\end{tikzpicture}

\caption{If there are both types of chords.} \label{figuretwo}
\end{figure}
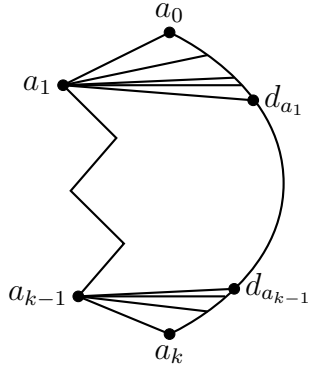

If every chord is adjacent only to w.l.o.g. $a_1$, then let $d_{a_1}$ denote the neighbor of $a_1$ amongst the $d_i$ with the largest index, and the cycle $C'= \{ a_1, a_2,\ldots,a_k,d_{m,1},d_{m_1-1}\ldots, d_{a_1}\} $  is induced and must be at least $2k$ long, otherwise we could reach every $a_1, \ldots, a_k$ from $d_{a_1}$ with a path shorter than $k-1$, a contradiction.   \end{proof}

\begin{remark} We did not use the structure of $Q_n$, only that the order of the automorphism is two. We also feel that the theorem holds with $2k$ instead of $2k-2$, but as the next example shows, this is not that relevant in the view of Conjecture \ref{conj:federsubi}.  \end{remark}

Turning our attention back to Conjecture \ref{conj:federsubi}, while Theorem \ref{thm:main} and Theorem \ref{thm:induced} settle a wide class of special cases. They are insufficient to guarantee even a $o(n)$ bound on the number of color changes. In the case of Theorem \ref{thm:main}, we saw in Example \ref{dircol} that there are colorings of $Q_n$ where $Comp(Q_n,c)$ contains exponentially large cycles. For Theorem \ref{thm:induced} the following example shows that there are colorings of $Q_n$ with induced cycles of length $\Omega(n)$.

\begin{example}
Let $n=2k$ and the edges of $Q_n$ be colored as follows. An edge facing the first $k$ directions is colored red if: when we change the coordinate corresponding to that edge to one, the sum of the first $k$ coordinates of the resulting vector is an odd number. An edge facing the last $k$ directions is colored red if: when we change the coordinate corresponding to that edge to one, the sum of the last $k$ coordinates of he resulting vector is an odd number. All the other edges are blue. 
\end{example} 
The component graph which belongs to this coloring can be understood as follows. If we fix the last $k$ coordinates (no matter how) the resulting cube is colored alternately according to its levels. We can say the same thing if we fix the first $k$ coordinates. Thus a red component contains vertices such that the first $k$ coordinates are always between two consecutive levels in a $k$-dimensional cube, and the second $k$ coordinates are between (possibly some other) two consecutive levels of a $k$-dimensional cube. The blue components also look like this. Thus we can represent a component by a vector of length two: if the component contains vertices between the levels $(a,a+1)$ in the first $k$ coordinates and between the levels $(b,b+1)$ in the second $k$ coordinates, we represent the component with the vector $(a,b)$. Red components are represented with vectors $(a,b)$ such that $a \equiv b \equiv 0 \pmod{2}$, and the blue ones with $a \equiv b \equiv 1 \pmod{2}$. It is easy to check that two components $(a,b)$ and $(c,d)$ are connected by an edge in $Comp(Q_n,c)$ if and only if $|a-c|=1=|b-d|$. Thus the component graph resembles a rectangular grid, and when $k$ is large, it is straightforward to find an induced cycle of length roughly $4k$ in it.

\section{A generalization to torus-like graphs}

Theorem \ref{thm:main} suggests that Conjecture \ref{conj:federsubi} is a question of the following type. Given a graph $G$ and an automorphism $\phi \in Aut(G)$, what is the minimal number $k$ of color changes, such that for every $2$-coloring of $E(G)$, there is a vertex $u$ and a path connecting $u$ to $\phi(u)$ which changes colors at most $k$ times? Let us call this parameter $d(G,\phi)$. It is immediate that for a cycle $C_{2k}$ and for the automorphism $\psi$ that takes every vertex of the cycle to the unique furthest vertex, $d(C_{2k}, \psi)=k-1$. Thus we see that in general this parameter can be arbitrarily large. When trying to explore various graphs and automorphisms, it is natural to look for a group, and consider its Cayley graph (without orientation and coloring) so we have a graph and a considerable amount of its automorphisms. With this in mind, observe that $Q_n$ and $C_{2k}$ are both special cases of cayley graphs of the following groups. Let $a_1, \ldots, a_n$ be integers.   

$$ G(a_1, \ldots , a_n):= \{ x_1,\ldots,x_{n} | \forall i,j : x_ix_j=x_jx_i, x_{i}^{a_i}=1 \} $$

Let us choose for the Cayley graph the natural generating set $ \{ x_1, \ldots, x_n \} $. This way, the parameters $(a_1, \ldots, a_n)$ determine a graph.  We get $Q_n$ with $(2,2,\ldots,2)$, and $C_{2k}$ with $(2k)$. We restrict our attention to even $a_i$ since in the odd case, the furthest vertex is not unique. The multiplication with $x_1^{a_1/2}\cdot \ldots \cdot x_n^{a_n/2}$ is an automorphism which takes every vertex to its antipodal pair (the unique farthest vertex). Note that the graph we get from the vector $(a_1, \ldots, a_n)$ is the Cartesian product of cycles $C_{a_1}, \ldots, C_{a_n}$, with the assumption that $C_2$ is a single edge. While we could just consider the Cartesian products of cycles in the first place, we think that the above mentioned way of arriving to these graphs motivates their study. Note that by using the longest cycle, it is straightforward to color the edges of $C_{a_1} \square \ldots \square C_{a_n}$ such that we need $\max_{i}(a_i/2)-1$ color changes. Observe that the Cartesian product of two edges is $C_4$, so we can think of $Q_n$ as the Cartesian product of $4$-cycles (and an additional edge if $n$ is odd). This leads us to formulate Conjecture \ref{conj:weak}.

\setcounter{conj}{3}
\begin{conj} Let $G$ be the Cartesian product of cycles of even length $a_1, \ldots , a_k$ not all equal to $2$. If $\phi_1, \phi_2, \ldots, \phi_k, \phi$ are the automorphisms, that take every vertex to the unique farthest vertex in the graphs $C_{a_1}, C_{a_2}, \ldots, C_{a_k}, G$ respectively, then $d(G,\phi) = \max_{i}{d(C_{a_i},\phi_i)}=\max_{i}{D(C_{a_i})} =\max_{i}{\frac{a_i}{2}-1}$. \end{conj}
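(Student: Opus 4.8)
To prove this for $k=2$ (the case settled in this paper), write $G=C_{2a}\square C_{2b}$ with $2a$ the larger cycle length, so $a\ge b\ge1$ and $(a,b)\neq(1,1)$, and let $\phi(i,j)=(i+a,j+b)$ be the antipodal automorphism; the claim is $d(G,\phi)=a-1$.

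For the lower bound I would take the colouring that gives every horizontal edge of position $i$ and every vertical edge of column $i$ the colour $i\bmod2$. A short computation shows that $Comp(G,c)$ is then a single cycle of length $2a$: each monochromatic component is a ``caterpillar'' whose spine is an even, respectively odd, column together with pendant edges into the neighbouring column, and the cyclic order of these components follows the cyclic order of the columns. On this $2a$-cycle the two components through any vertex $u$ occupy positions $\{c-1,c\}$ (where $c$ is the column of $u$), and the two through $\phi(u)$ occupy $\{c+a-1,c+a\}$, so their distance is exactly $a-1$; by the correspondence between paths in $G$ and walks in $Comp(G,c)$, every $u$--$\phi(u)$ path then changes colour at least $a-1$ times, so $d(G,\phi)\ge a-1$ (alternatively: collapsing all columns turns $G$ into a properly $2$-coloured $C_{2a}$, and one checks that a vertical move never saves a colour change).

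The substance is the upper bound: for an arbitrary $2$-colouring $c$ of $E(G)$, produce an antipodal pair joined by a path with at most $a-1$ colour changes. The plan is to use only paths built from three monotone pieces --- a horizontal arc in some row $R_{j_0}\cong C_{2a}$, a vertical ``elevator'' arc of length exactly $b$ in one column, and a horizontal arc in the antipodal row $R_{j_0+b}$ --- and to optimise over the row $j_0$, the elevator column, the two horizontal directions and the two turning vertices. Such a path joins $(i_0,j_0)$ to $\phi(i_0,j_0)$ once its horizontal arcs cover total horizontal displacement $a$, and since an arc of length $\ell$ on a cycle carries at most $\ell-1$ colour changes, its cost is at most $(a-2)+(\text{changes on the elevator})+(\text{two turn penalties})$. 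So I would try to win by saving the elevator cost (at most $b-1$) together with the turns: if some column contains $b$ consecutive monochromatic vertical edges spanning levels $j_0,\dots,j_0+b$, that column is a free elevator, and one then argues --- using that the two horizontal edges at a turning vertex usually carry opposite colours, with torus wrap-around and the freedom in $j_0$ providing slack in the exceptional case --- that the two turns together cost at most $1$. The hard case is the one with no usable monochromatic run, where the colouring restricted to every row and every column is (essentially) a proper $2$-colouring, so $c$ is governed by two sign vectors $(\epsilon_j)_{j\in\mathbb Z_{2b}}$ and $(\delta_i)_{i\in\mathbb Z_{2a}}$ (the horizontal edge at $(i,j)$ having colour $\epsilon_j\oplus(i\bmod2)$, the vertical edge colour $\delta_i\oplus(j\bmod2)$); there I would replace the elevator by a staircase climbing through many columns, exploiting that consecutive vertical edges of a proper column alternate --- so keeping the current colour costs one horizontal step but no switch --- and choose where the staircase turns according to the runs of constant values in $(\epsilon_j)$ and $(\delta_i)$.

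The hard part will be exactly this structured case and, throughout, the stubborn off-by-one in the turn/parity bookkeeping. Theorem \ref{thm:main} is no help here: for the colouring that is constant on each direction, $Comp(G,c)$ is complete bipartite and hence contains a cycle of length $4b$, which exceeds the threshold $2a$ needed to apply Theorem \ref{thm:main} with $k=a-1$ whenever $2b>a$. So the structured colourings must be treated by hand, and the subtlety is that a staircase with $b$ turns can a priori cost up to $2b$ extra colour changes --- more than the whole budget $a-1$ --- so one must route it so that every turn is free, or trade the non-free turns for a proportionally shorter climb, and then verify that the balance comes out to exactly $a-1$ for all sign vectors and both parities of $a$ and $b$. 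The small cases $Q_3=C_4\square C_2$ and $Q_4=C_4\square C_4$ (where $a-1=1$) already show that the winning path is typically asymmetric and wraps around the torus, so the final argument will not be symmetric and will split into several subcases.
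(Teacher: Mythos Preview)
Your plan is genuinely different from the paper's argument, but as written it is not a proof: two of the steps you rely on are not true in the generality you need, and the third is left open.

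First, the dichotomy ``either some column has a monochromatic run of length $b$, or the colouring restricted to every row and column is essentially a proper $2$-colouring governed by two sign vectors'' is false. Absence of a monochromatic run of length $b$ in every column only forbids long runs; it does not force alternation. A column of $C_{2b}$ can have vertical runs of length $b-1$ everywhere and still never give you a free elevator, so the ``structured'' case you isolate is much smaller than the actual hard case you must handle.

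Second, even in your easy case (a free monochromatic elevator of length $b$) the arithmetic only gives $a$, not $a-1$: with horizontal arcs of lengths $\ell_1+\ell_2=a$ you get at most $(\ell_1-1)+(\ell_2-1)=a-2$ changes on the arcs, $0$ on the elevator, and up to $2$ at the turns. Your claim that ``the two turns together cost at most $1$'' is exactly the missing content; both horizontal edges incident to the elevator endpoint can have the wrong colour, and optimising over $j_0$ and the two directions does not obviously save you, because the free elevator may exist only for one specific $j_0$. You acknowledge this as ``the stubborn off-by-one'', but it is the whole game.

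Third, the staircase idea for the structured case is left as a hope (``route it so that every turn is free, or trade non-free turns for a shorter climb''), with no mechanism given.

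By contrast, the paper's proof avoids all of this case analysis. It first reduces to finding $(x,y)$ and $(x+a,y+a)$ joined by at most $a-1$ changes (with $a$ the \emph{smaller} parameter), then works only with \emph{diagonal} paths of length $2a$---paths visiting $(x,y),(x+1,y+1),\ldots,(x+a,y+a)$ and their descending analogues. For each starting vertex on a fixed row it takes a \emph{lazy} ascending and a lazy descending $a$-diagonal, and charges every colour change to the $4$-cycle where it occurs. The key lemma is local: if a lazy ascending diagonal changes colour twice at a given $4$-cycle, that $4$-cycle is properly coloured, and then the lazy descending diagonal through it changes colour zero times there; hence each $4$-cycle absorbs at most two charges in total. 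Averaging over the $4a$ diagonals gives a diagonal with at most $a$ changes, and one more look at the starting $4$-cycles (rotating the torus so that a non-properly-coloured $4$-cycle sits in the first row, if one exists) knocks this down to $a-1$; if every $4$-cycle is properly coloured the colouring is directional and one change suffices. The averaging-over-diagonals idea is what replaces your turn/elevator bookkeeping, and it handles all colourings uniformly.
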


Note that Conjecture \ref{conj:weak} is a generalization of Conjecture \ref{conj:federsubi} and suggests that the reason that we need a single color change is that $Q_n$ is made of $4$-cycles. Also note that when we are not allowed to alternately color these $4$-cycles, Theorem \ref{simple} shows that we can not force a color change, as expected. We prove Conjecture \ref{conj:weak} for $k=2$. 

\begin{thm} \label{thm:torus} If $1 \leq a \leq b \neq 1$ and $G=C_{2a}\square C_{2b}$ is the Cartesian product of two cycles of even length, then there are vertices $u,v \in V(G)$ such that their distance is $a+b$, and there is a path between $u$ and $v$ which changes its color at most  $b-1$ times. 
\end{thm}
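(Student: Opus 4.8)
The goal is to prove Conjecture \ref{conj:weak} for $k=2$, i.e. for $G = C_{2a} \square C_{2b}$ with $1 \le a \le b \ne 1$: for any $2$-coloring of $E(G)$ there is a vertex $u$ with a path to its antipode $\phi(u)$ (distance $a+b$) using at most $b-1$ color changes. In light of Theorem \ref{thm:main}, the natural reduction is: if $\mathrm{Comp}(G,c)$ has no cycle of length $\ge 2b+1$ (equivalently $2(b-1)+3$), we are immediately done, so we may assume $\mathrm{Comp}(G,c)$ contains a long cycle. So the plan has two regimes. In the "short-cycle" regime, Theorem \ref{thm:main} closes the case outright. In the "long-cycle" regime we must exploit the specific geometry of the torus $C_{2a} \square C_{2b}$, which Theorem \ref{thm:main} cannot see.

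**The long-cycle regime.** The key structural feature of $C_{2a}\square C_{2b}$ is that it is foliated by $2b$ disjoint copies of $C_{2a}$ (the "columns" obtained by fixing the second coordinate) and by $2a$ disjoint copies of $C_{2b}$ (the "rows"), and the antipodal map $\phi$ is the product of the antipodal maps on each factor. The idea is to look, for each of the $2b$ columns $C^{(j)} \cong C_{2a}$, at the induced $2$-coloring, and apply the one-dimensional fact $d(C_{2a},\psi)=a-1$ (noted in the paper) inside that column: within column $j$ there are antipodal-in-$C_{2a}$ vertices joined by a path in that column with at most $a-1$ color changes. Call such a vertex pair a "good pair" for column $j$; the two vertices of a good pair are at Hamming-type distance $a$ in the first coordinate and share the second coordinate. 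To reach the true antipode we must additionally travel $b$ steps in the second coordinate. So the real task is: choose the columns and the traversal across rows so that the color changes incurred in moving between columns, plus those inside columns, total at most $b-1$ — and here the $a-1$ inside a single column must somehow be "absorbed", which forces us to be cleverer than just concatenating.

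**Making the bookkeeping work.** The cleaner route is probably to mimic the proof of Theorem \ref{thm:main} but relativized to the torus structure: define, for a monochromatic component $a$ of $(G,c)$, the set $S(a)$ of components meeting $\phi(a)$ exactly as in the main theorem, and run the same "shrinking $X_i$" argument, but replace the crude cycle-length bound by a bound coming from the fact that $\mathrm{Comp}(C_{2a}\square C_{2b},c)$, while it may have long cycles, has bounded "width" in the $C_{2b}$-direction in a sense that lets $B_{b-1}$ do the catching. Concretely: project the component graph onto the second coordinate, observe that any monochromatic component spans an interval of rows of length $\le$ something controlled, and show that either $B_{b-1}(a)\cap S(a)\ne\emptyset$ for some $a$, or one produces a cycle in $\mathrm{Comp}(G,c)$ whose length exceeds $2b+2$ — contradicting the short-cycle assumption that we may already assume is violated. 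In other words, combine: (i) Theorem \ref{thm:main} handles short $\mathrm{Comp}$; (ii) a long cycle in $\mathrm{Comp}$, pulled back to $G$, must "wrap around" one of the two cycle factors, and the $b$-wrapping versus $a$-wrapping dichotomy, together with $a \le b$, lets us re-run the Menger/cut argument with radius $b-1$ and still catch $S(a)$ because the obstructing cut would force a cycle longer than any that can exist given the torus's girth and circumference constraints.

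**Main obstacle.** The hard part is the long-cycle regime: a long cycle in $\mathrm{Comp}(G,c)$ genuinely can occur (e.g. the directional-type colorings), so one cannot simply invoke Theorem \ref{thm:main}; one must show that on the torus such a long cycle is "geometrically honest" — it corresponds to a monochromatic structure winding around a factor — and that this winding is compatible with a $(b-1)$-switch antipodal path. Pinning down the right invariant (a winding number / homology class of monochromatic components in $H_1(\text{torus})$) and proving that the two colors' components cannot both wind the "expensive" way is, I expect, the crux. I would also need to verify the distance claim $d(u,v)=a+b$ separately, which is routine once $u,v$ are chosen as a pair antipodal in both coordinates, since $a \le b$ forces $a+b \le$ the diameter and the path constructed has exactly that length.
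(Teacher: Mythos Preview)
Your proposal is not a proof but a plan, and the plan has a genuine gap at exactly the point you flag as the ``main obstacle.'' You correctly observe that Theorem~\ref{thm:main} alone cannot close the argument, since $\mathrm{Comp}(C_{2a}\square C_{2b},c)$ can contain cycles of length far exceeding $2b+1$ (indeed, directional-type colorings give complete bipartite component graphs). But your treatment of the long-cycle regime never leaves the level of heuristics: the assertions that a long component-graph cycle must ``wrap around'' a factor, that one can assign it a well-defined class in $H_1$ of the torus, and that the two colors ``cannot both wind the expensive way'' are all plausible-sounding but unproven, and it is not clear how any of them would actually produce a vertex $u$ and a concrete path to $\phi(u)$ with at most $b-1$ switches. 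The Menger/cut rerun you sketch would need a replacement for the cycle-length hypothesis of Theorem~\ref{thm:main}, and you do not supply one.

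The paper's proof takes a completely different and much more direct route, making no use of the component graph or Theorem~\ref{thm:main} at all. It first reduces to finding vertices $(x,y)$ and $(x+a,y+a)$ joined by a path with at most $a-1$ switches (the remaining $b-a$ steps in the second coordinate cost at most $b-a$ further switches). It then considers, from each vertex of a fixed row, a ``lazy'' ascending and a ``lazy'' descending diagonal of length $2a$ (greedy shortest-switch paths through the diagonal lattice points), and charges every color change of such a diagonal to the $4$-cycle where it occurs. A local lemma shows each $4$-cycle is charged at most twice (if the ascending diagonal switches twice there, the $4$-cycle is alternately colored and the descending one switches zero times). Averaging over the $4a$ diagonals and the $2a^2$ relevant $4$-cycles gives a diagonal with at most $a$ switches; one more observation about non-properly-colored $4$-cycles (or the trivial structure when all $4$-cycles are proper) improves this to $a-1$. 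This is an elementary double-counting argument specific to the torus geometry, and it sidesteps entirely the component-graph analysis you were aiming for.
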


\begin{proof}
Let us represent the vertices of $G$ by ordered pairs, $(x,y)$ where $x \in \{1,\ldots,2a \}$ and $y \in \{1,\ldots,2b \}$. The coordinates are understood modulo $2a$ and $2b$ respectively. Thus we are looking for a pair of vertices of the form $(x,y)$ and $(x+a,y+b)$, and a path between them that changes color at most $b-1$ times. Observe that it is enough to show that there are vertices of the form $(x,y)$ and $(x+a,y+a)$ and a path between them that changes colors at most $a-1$-times. Since given such a path, we can get from $(x+a,y+a)$ to $(x+a,y+b)$ using only $b-a$ edges, thus we have at most $b-a$ new color changes resulting in at most a total of $b-1$ color changes as required.  Let us call a path \textit{$j$-ascending diagonal} that starts at $(x,y)$ if it connects vertices $(x,y), (x+1,y+1), \ldots ,(x+j, y+j)$ and is of length $2j$. Similarly we define the \textit{$j$-descending diagonal} that starts at $(x,y)$ to be a path connecting vertices  $(x,y), (x-1,y+1), \ldots ,(x-j, y+j)$ and is of length $2j$. 

We will prove slightly more. Either the coloring is such that every cycle of length four is colored properly, or there are vertices $(x,y)$ and $(x+a,y+a)$, connected by an ascending or a descending diagonal that changes colors at most $a-1$ times. 

Let us call a $j$-ascending diagonal starting at $(x,y)$ \textit{lazy} if it has the fewest color changes amongst all such $j$-diagonals, and the subgraph that is a $(j-1)$-ascending diagonal starting at $(x,y)$ is also lazy. It is called lazy, since it does not change its color, until its necessary. Such a diagonal exists by induction on $j$. For $j=1$ it is trivial. Suppose that for $j-1$ we have a lazy $j-1$-ascending diagonal that starts at $(x,y)$, continue this path with the last two edges of a $j$-ascending diagonal that starts at $(x,y)$ and changes its color the minimal number of times. It is easy to check that the resulting path will be a lazy $j$-ascending diagonal. 

Take a lazy $a$-ascending and a lazy $a$-descending diagonal starting from the vertices of the cycle $(1,1),(2,1), \ldots, (2a,1)$. Let us denote the set of these diagonals $D$. We will bound the cumulative number of color changes of diagonals in $D$. For every color change of such a diagonal we associate a $4$-cycle. If an ascending diagonal that started at $(x,y)$ changes colors at $(x+i,y+i)$ or between $(x+i,y+i)$ and $(x+i+1,y+i+1)$ we say that the diagonal changes colors at the $4$-cycle spanned by $(x+i,y+i),(x+i+1,y+i),(x+i,y+i+1),(x+i+1,y+i+1)$. Thus a $4$-cycle can get $0,1$ or $2$ color changes from an ascending diagonal. We associate a $4$-cycle to every color change of a descending diagonal similarly. Every $4$-cycle has a unique ascending and a unique descending diagonal in $D$ which might contribute to the number of color changes at that cycle, so there can be at most $4$ color changes at a $4$-cycle.

\begin{lem} For every $4$-cycle we associated at most two color changes. 
\end{lem}

\begin{proof} Every diagonal can change its color at most two times at a given $4$-cycle. Thus it is enough to show that given a $4$-cycle, if the unique ascending diagonal within $D$ that passes through it, changes its color two times, then the unique descending diagonal passing through it does not change its color. If a lazy ascending diagonal changes its color two times at the $4$-cycle $(x+i,y+i),(x+i+1,y+i),(x+i,y+i+1),(x+i+1,y+i+1)$, it had to happen the following way: We arrived (w.l.o.g.) with color blue to $(x+i,y+i)$, then the next edge must cause a color change, thus both edges leaving $(x+i,y+i)$ are red. But then we again have to change colors, so the edges arriving to $(x+i+1,y+i+1)$ must have color blue. Thus a lazy decreasing diagonal will not change its colors at this $4$-cycle as it is not necessary. 
\end{proof}

This is already enough to prove the existence of a diagonal of length $a$ with the number of color changes being at most $a$. There are $4a$ diagonals in $D$, and their cumulative number of color changes is at most two times the number of $4$-cycles they pass through. Thus the average number of color changes of diagonals in $D$ is at most $4a^2/4a=a$. 

To go down from $a$ color changes to $a-1$, it is enough to improve our bound on the number of color changes at a single $4$-cycle. Consider the cycles at the start of every diagonal in $D$, namely the cycles spanned by $(j,1),(j+1,1),(j,2),(j+1,2)$ for $j \in \{1,\ldots,2a\}$. Since these are the first $4$-cycles, no diagonal can change its color two times at these. If both the ascending and the descending diagonal have to change its color at such a cycle, it has to be colored properly. Thus if the coloring is such that there is a $4$-cycle that is not colored properly, we can rotate the torus so that it is spanned by the vertices $(1,1),(1,2),(2,1),(2,2)$ and we are done. If every $4$-cycle is colored properly, every edge of the form $((i,j),(i+1,j))$ is colored with the first color, and every other edge is colored with the second. Thus we can connect every pair of vertices by a single color change.
\end{proof}

\section{Open questions}

Since Theorem \ref{thm:main} can be applied to any automorphism, it is natural to ask whether Conjecture \ref{conj:federsubi} is true for any automorphism of $Q_n$.

\begin{question}
Is it true that for any $\phi \in Aut(Q_n)$ and any $2$-coloring of $E(Q_n)$, there is a vertex $u$ such that $u$ and $\phi(u)$ are connected by a path that changes its color at most once?
\end{question}

Feder and Subi proved that if no $4$-cycle is colored properly, we do not need a color change in $Q_n$. Is something similar true in general?

\begin{question}
Let $G$ be the Cartesian product of even cycles of length $a_1, \ldots, a_k$ not all equal to $2$, $\phi$ be the automorphism that takes every vertex to its unique furthest vertex in the graph $G$ and $a_j = \max_{i}{a_i}$. Is it true that if we consider only colorings without properly colored non nullhomotopic $a_j$ cycles, there is always a vertex $u$ such that $u$ and $\phi(u)$ are connected by a path that changes its color less than $d(G,\phi)$ times?
\end{question}

In Conjecture \ref{conj:weak} we restricted ourselves to cycles of even length, to have a similar automorphism that we had at $Q_n$. Is this necessary?

\begin{question}
If $G$ is the Cartesian product of cycles $a_1,\ldots, a_k$ not all from the set $\{2,3,5\}$, is it true that $D(G)= \max_{i}{D(C_{a_i})}$?
\end{question}

The restriction on the sizes of the cycles is to avoid trivial counterexamples. We have that $D(C_2)=D(C_3)=D(C_5)=0$ but it is easy to color $C_i \square C_j$ for $i,j \in \{2,3,5\}$ and give an automorphism in such a way that we need a color change. These counterexamples are caused by the trivial phenomenon that if we would be interested in $m$ colorings, at the Cartesian product of $m$ cycles (even if their length is small) we would need $m-1$ color changes.

Leader and Long asked whether $o(n)$ color changes suffice in $Q_n$. We were unable to answer this question, but we were also unable to find colorings of $Q_n$ where the average number of color changes to get to the antipodal point is larger than $c\sqrt{n}$. Not even when we relaxed the condition that every vertex is trying to get to its antipodal pair. An example where the average number of color changes is $c \sqrt{n}$ is when the cube is colored alternately according to its levels.

\begin{question}
Is it true that for every sequence of colorings of $E(Q_n)$, the average number of color changes to get from a vertex to its antipodal pair is $O(\sqrt{n})$?
\end{question}

\begin{bibdiv}
\begin{biblist}
\bib{bollobas}{article}{
  title={The evolution of the cube},
  author={B. Bollobás},
	journal={Annals of Discrete Mathematics},
	volume={17},
	pages={91-97},
  date={1983},
}
\bib{norine}{book}{
  title={Edge-antipodal colorings of cubes, Open Problem Garden},
  author={M. Devos},
	author={S. Norine},
	date={2008},
}
\bib{erdos}{article}{
  title={Evolution of the $n$-cube},
  author={P. Erdős},
	author={J. Spencer},
	journal={Computers \& Mathematics with Applications},
	volume={5},
	issue={1},
	pages={33-39},
  date={1979},
}
\bib{antipod}{article}{
  title={On Hypercube Labellings and Antipodal Monochromatic Paths},
  author={T. Feder},
	author={C. Subi},
	journal={Discrete Applied Mathematics},
	volume={161},
	pages={1421-1426},
  date={July 2013},
}
\bib{leaderi}{article}{
  title={Long geodesics in the subgraphs of the cube},
  author={I. B. Leader},
	author={Eoin Long},
	journal={\tt arXiv:0706.1234 [math.FA]},
	date={2013 Jan},
}
\end{biblist}
\end{bibdiv}

\end{document}